\newtheorem{theorem}{Theorem}[section]
\newtheorem{lemma}[theorem]{Lemma}
\newtheorem{proposition}[theorem]{Proposition}
\newtheorem{corollary}[theorem]{Corollary}
\newtheorem{definition}[theorem]{Definition}
\newtheorem*{proposition*}{Proposition}
\newtheorem*{lemma*}{Lemma}
\newtheorem*{theorem*}{Theorem}
\newtheorem*{corollary*}{Corollary}
\newtheorem*{conjecture*}{Conjecture}
\title{Affine three-manifolds with centralizing holonomy}
\author{Raphaël V.~{\sc Alexandre}
\footnote{
Université de Strasbourg, IRMA, 7 rue René Descartes, 67000 Strasbourg.
Email address: {\tt raphael.alexandre@math.cnrs.fr}.
This work of the Interdisciplinary Thematic Institute IRMIA++, as part of the ITI 2021 - 2028 program of the University of Strasbourg, CNRS and Inserm, was supported by IdEx Unistra (ANR-10-IDEX-0002), and by SFRI - STRAT’US project (ANR-20-SFRI-0012) under the framework of the French Investments for the Future Program. 
}
}
\newcommand\dd{\,{\rm d}}
\DeclareMathOperator*\id{id}
\newcommand\R{\mathbf{R}}
\DeclareMathOperator*\GL{GL}
\DeclareMathOperator*\SL{SL}
\newcommand\cE{\mathcal{E}}
\newcommand{\iI}{\mathopen{[}0\,,1\mathclose{]}}
\begin{document}
\maketitle

\begin{abstract}
According to the Markus conjecture, closed flat affine manifolds with parallel volume should be complete. We show it is the case for three-manifolds when the holonomy centralizes an affine transformation preserving the volume. It is notably the case when the holonomy group has non trivial center or when the automorphism group is non discrete.
\end{abstract}

\section*{Introduction}

A smooth manifold $M$ is a \emph{flat affine manifold} when there is a pair $(D,\rho)$ of a developing map $D\colon \widetilde M \to \R^n$ which is a local diffeomorphism, with $\widetilde M$ the universal cover of $M$, and $\rho\colon\pi_1(M,p)\to \R^n\rtimes \GL(n,\R)$ a holonomy morphism verifying the equivariance condition: 
\begin{equation}
\forall x\in\widetilde M,\; \forall g\in\pi_1(M,p),\; D(gx)=\rho(g)D(x).
\end{equation}
The image of the holonomy morphism is called the \emph{holonomy group}.

A manifold $M$ with this structure has \emph{parallel volume} when the holonomy morphism takes its values in $\R^n\rtimes \SL(n,\R)$.
A flat affine manifold $M$ is \emph{complete} when $D\colon\widetilde M \to \R^n$ is a diffeomorphism. One of the main conjectures about closed flat affine manifolds is:

\begin{conjecture*}[Markus~\cite{Markus}]
Any connected closed flat affine manifold with parallel volume is complete.
\end{conjecture*}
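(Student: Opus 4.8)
The plan is to control the developing image $\Omega = D(\widetilde M)\subseteq\R^n$ by means of the invariant volume. Since the holonomy lies in $\R^n\rtimes\SL(n,\R)$, it preserves the Lebesgue measure $\lambda$ on $\R^n$; pulling $\lambda$ back through the local diffeomorphism $D$ produces a parallel, hence $\pi_1(M)$-invariant, volume form on $\widetilde M$, which descends to a finite total volume on the compact manifold $M$. Completeness is the assertion that $D$ is a diffeomorphism onto $\R^n$, so I would split the problem into two halves: first show that $D$ is a covering map onto its open image $\Omega$ --- equivalently that $D$ is proper, so that $M=\Omega/\Gamma$ with $\Gamma=\rho(\pi_1(M))$ acting properly discontinuously --- and then show that $\Omega=\R^n$. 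Since $\R^n$ is simply connected, a proper $D$ together with $\Omega=\R^n$ gives a covering of $\R^n$, hence a diffeomorphism.

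Before attacking $\Omega$ directly, I would extract structural constraints from the parallel volume. The radiance obstruction of Goldman--Hirsch shows that a closed affine manifold carrying a parallel volume cannot be radiant: the holonomy can have no global fixed point, so its translational parts are genuinely essential and cannot be conjugated away. This rules out the simplest potential counterexamples --- the linear, cone-like structures whose radiant field has strictly positive divergence --- and forces the affine holonomy to be nondegenerate in a precise sense. In tandem I would try to pin down the algebraic type of $\Gamma$: the heuristic expectation is that $\Gamma$ is virtually polycyclic, which would suggest an induction on a normal series, reducing to the abelian or nilpotent holonomy regimes where the developing map is already well understood.

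The heart of the argument is a dynamical step. Assuming toward a contradiction that $\Omega\neq\R^n$, I would study the boundary $\partial\Omega$ and the escape of geodesics toward it. The finiteness of the invariant volume yields recurrence --- via Poincaré recurrence for the geodesic flow on the unit tangent bundle, or for the $\Gamma$-action on $\Omega$ equipped with the invariant measure --- and the aim is to show that an incomplete geodesic reaching $\partial\Omega$ in finite affine time is incompatible with this recurrence, forcing $\partial\Omega=\emptyset$ and hence $\Omega=\R^n$. Combined with properness of $D$, this would close the argument and establish completeness.

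I expect the obstacle to be precisely the two global statements I isolated: properness of $D$ and the emptiness of $\partial\Omega$. Neither follows from the parallel volume alone, and this is exactly why the Markus conjecture remains open in general. A proper $\Gamma$-invariant domain with nontrivial boundary is not excluded by any local or purely measure-theoretic data, and the recurrence argument stalls without control on how $\Gamma$ moves points near $\partial\Omega$. My honest assessment is that in full generality the plan cannot be closed with present techniques; to make a dimensional reduction effective one must impose a hypothesis on the holonomy --- for instance a nontrivial centralizing volume-preserving affine transformation, which supplies an invariant vector field and a one-parameter symmetry along which to foliate and drop dimension --- and it is under such a hypothesis, in dimension three, that the strategy can actually be carried through.
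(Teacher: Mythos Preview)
The statement you are attempting is the Markus conjecture itself, which the paper explicitly presents as \emph{open}: it is stated as a conjecture, not a theorem, and the paper proves only the special case of Theorem~\ref{thm-3fold}. You concede this in your last paragraph, and that concession is the correct conclusion.

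Several of the intermediate steps in your sketch are not merely hard but are themselves open or ill-posed. First, ``show that $D$ is proper onto its image'' is essentially as deep as the conjecture: developing maps of incomplete affine structures are typically \emph{not} coverings of their images, and nothing in the parallel-volume hypothesis forces properness. Second, your recurrence step appeals to Poincar\'e recurrence for the geodesic flow on the unit tangent bundle, but an affine manifold has no canonical metric, hence no unit tangent bundle and no natural flow-invariant finite measure there; the parallel volume lives on $M$, not on $TM$, and the affine geodesic spray does not preserve a finite measure in a way that yields recurrence of orbits heading to $\partial\Omega$. Third, the ``heuristic expectation'' that $\Gamma$ is virtually polycyclic is the Auslander conjecture, itself open (and usually stated only for \emph{complete} affine manifolds, so invoking it here is circular).

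For comparison, the paper's route to its partial result is not measure-theoretic at all. It first reduces algebraically: the centralizing hypothesis, together with Goldman--Hirsch irreducibility, forces the linear holonomy into the block-triangular group $\R^2\rtimes\SL(2,\R)$. Completeness for closed manifolds with holonomy in such a ``unipotent co-discompacity one fibered'' group is then obtained (Theorem~\ref{thm-unip-disc1}) by Fried--Carri\`ere dynamics: one proves integrability of the unipotent fiber direction, analyzes incomplete triangles via accumulation of charts under the holonomy, and finally uses Goldman--Hirsch irreducibility to rule out a boundary hyperplane. Your closing remark about an invariant vector field and dimension drop points toward the first of these two steps, but the completeness argument itself is convexity/visibility, not recurrence.
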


It is still an open question, even in dimension three.
Most of the cases known about the Markus conjecture consist in adding an hypothesis on the holonomy group. Notably, Fried-Goldman-Hirsch~\cite{FGH} have shown the case with nilpotent  holonomy, generalizing the abelian case shown by Smillie~\cite{Smillie2}.
Also, Carrière~\cite{Carriere} has shown the conjecture when the holonomy has discompacity one. 

This last hypothesis means that for any diverging sequence of holonomy transformations, at most \emph{one} singular value of the linear parts of the transformations can tend to zero. In particular, Carrière deduces the completeness of every flat Lorentzian manifold.

More diverse hypothesis were studied to prove the conjecture. When the holonomy belongs to a small solvable group it holds~\cite{GH2}, when the developing map is sufficently constrained it can also hold, see~\cite{JK}, \cite{Tholozan}.
The author has shown~\cite{these} completeness of manifolds when the holonomy  preserves a class of foliations.

However, Markus conjecture is only known in its general statement in dimension $2$ by Carrière theorem and complex dimension $2$ with tools from complex geometry (see~\cite{Klingler}).

Our theorem is in dimension $3$ and states a hypothesis on the holonomy group. We will comment this hypothesis afterward.

\begin{theorem*}[\ref{thm-3fold}]
Let $M$ be a connected closed $(\R^3,\R^3\rtimes \SL(3,\R))$-manifold. If the holonomy of $M$ centralizes a non trivial transformation $\phi\in \R^3\rtimes \SL(3,\R)$, then $M$ is complete.
\end{theorem*}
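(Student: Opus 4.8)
The plan is to use the Jordan decomposition of $\phi$ inside the real algebraic group $\Aff(3)=\R^3\rtimes\SL(3,\R)$ in order to reduce to a short list of cases, in each of which the linear holonomy preserves a parallel foliation of $M$; then one concludes either from the results quoted above or from a direct argument, the latter being needed only for one case. Write $\Gamma=\rho(\pi_1(M))$, let $L(\Gamma)\subseteq\SL(3,\R)$ be its linear part, and put $\Omega=D(\widetilde M)$, so that $M$ is complete iff $\Omega=\R^3$. Decompose $\phi=\phi_s\phi_u$ into its commuting semisimple and unipotent parts; each of $\phi_s,\phi_u$ is a polynomial in $\phi$, so $\Gamma$ centralizes both, and at least one is nontrivial. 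Replacing $\phi$ by that one and conjugating the pair $(D,\rho)$, we may assume that either $\phi=(0,A)$ with $A\in\SL(3,\R)$ semisimple and $A\ne I$, or $\phi=(v_0,A)$ with $A$ unipotent and $v_0\in\ker(A-I)$ (the case $A=I$, $v_0\ne0$ being a nontrivial translation). In every case $L(\Gamma)$ centralizes $A$.

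Suppose first that $\phi$ is \emph{not} a translation, i.e.\ $A\ne I$. Then $A$ admits a $2$-dimensional invariant subspace $W\subseteq\R^3$ which, since $L(\Gamma)$ commutes with $A$, is also $L(\Gamma)$-invariant: in the semisimple case take an eigenspace of $A$ together with a complementary $A$-invariant subspace, one of which is a plane; in the unipotent case take $\ker(A-I)$ when $A-I$ has rank one and $\operatorname{im}(A-I)=\ker(A-I)^{2}$ when it has rank two. The constant distribution $W$ on $\R^3$ pulls back through the local diffeomorphism $D$ to a $\pi_1$-invariant distribution on $\widetilde M$ that is parallel for the flat torsion-free connection, hence involutive; it descends to a parallel codimension-one foliation $\mathcal G$ of $M$, holonomy-invariant, whose leaves develop into affine planes parallel to $W$. (If in addition $A$ is unipotent with a single Jordan block, the centralizer of $A$ in $\SL(3,\R)$ consists of unipotent matrices, so $\Gamma$ is nilpotent and $M$ is complete by Fried--Goldman--Hirsch~\cite{FGH}.) In the remaining non-translation cases, the existence of the holonomy-invariant parallel foliation $\mathcal G$ places them within the scope of the author's foliated completeness theorem~\cite{these}, which yields completeness.

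It remains to treat the case where $\phi=(v,I)$ is a nontrivial translation, and this is the part I expect to be the main obstacle. Here the only forced constraint on $L(\Gamma)$ is that it fixes the vector $v$; in general no codimension-one subspace of $\R^3$ is invariant, so the structure inherited by $M$ is weaker — a nowhere-vanishing parallel vector field $X$ with $D_*X$ equal to the constant field $v$. Since $X$ is parallel its divergence vanishes, so its flow $\Phi^t$ (complete, as $M$ is closed) preserves the parallel volume form and Poincaré recurrence applies to it. On $\widetilde M$ the developing map conjugates $\Phi^t$ to the translation $y\mapsto y+tv$, whence $\Omega+\R v=\Omega$; writing $\pi\colon\R^3\to\R^3/\R v\cong\R^2$, one gets $\Omega=\pi^{-1}(\omega)$ with $\omega\subseteq\R^2$ open and invariant under the induced action of $\Gamma$, which lands in $\R^2\rtimes\SL(2,\R)$ because $v$ is fixed and the volume preserved. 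If the leaf space of the one-dimensional foliation by orbits of $X$ were a closed surface it would be a flat affine surface with parallel area form, hence complete by Carrière~\cite{Carriere}, forcing $\omega=\R^2$ and $\Omega=\R^3$.

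The difficulty is precisely that this leaf space need not be a manifold: the orbits of $X$ may fail to be closed, or wind recurrently. I would therefore argue directly that $\partial\omega=\emptyset$. A boundary point of $\omega$ lies at finite distance from a developed point; on the other hand, Poincaré recurrence of the volume-preserving flow $\Phi^t$ on the compact $M$, transported through the conjugacy with translation by $v$, produces elements $\gamma_k\in\pi_1(M)$ whose holonomies $\rho(\gamma_k)$ approximate translations by arbitrarily large multiples of $v$ near that point — which is incompatible with $\omega$ having a boundary at finite distance in the $v$-direction. Turning this into a proof requires making the approximation quantitative: controlling the holonomy of the foliation by $X$ and ruling out escape to $\partial\omega$ in bounded time. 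This is the step I expect to demand the most care, and it is where the compactness of $M$ and the volume hypothesis must be genuinely combined.
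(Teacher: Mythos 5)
Your reduction via the affine Jordan decomposition is a sensible start and runs parallel to the paper's normalization of $\phi$, but both halves of your case analysis have genuine gaps. In the non-translation case you dispose of the problem by invoking ``the author's foliated completeness theorem''~\cite{these} merely because the linear holonomy preserves a plane $W$, hence a parallel codimension-one foliation. That citation is doing all the work and its hypotheses are never checked: the result in~\cite{these} concerns a specific class of foliations, and the bare existence of a linear-holonomy-invariant plane is not a known sufficient condition for completeness --- the paper's own target group $\R^2\rtimes\SL(2,\R)$ preserves exactly this kind of data, and the paper still has to prove a new completeness theorem (theorem~\ref{thm-unip-disc1}) to handle it, precisely because neither Carrière (discompacity can be $2$) nor Fried--Goldman--Hirsch (the group is not nilpotent) applies. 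You also miss the much cheaper argument available here, which is the mechanism of proposition~\ref{prop-transl}: if $\phi_s\neq\id$, its fixed-point set is a nonempty proper \emph{affine} subspace of $\R^3$, and since the holonomy centralizes $\phi_s$ it preserves that subspace, contradicting Goldman--Hirsch irreducibility~\cite{GH}. So the semisimple case cannot occur at all; what actually needs proving is not it, but the unipotent/translation situation you defer.

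And that deferred case is the heart of the theorem, which you leave open by your own admission. After you observe (correctly, and in essence as the paper does) that the constant field $v$ descends to the closed manifold, so $\Omega$ is invariant under translation by $\R v$ and the holonomy lands in $\R^3\rtimes(\R^2\rtimes\SL(2,\R))$, nothing you write forces $\omega=\R^2$: the leaf space need not be a closed surface, so Carrière's theorem is unavailable, and Poincaré recurrence of the flow of $X$ only returns orbits to a neighborhood --- it does not produce holonomy elements approximating large translations near a prescribed boundary point of $\omega$, which is the quantitative step you flag but do not supply. This is exactly where the paper's theorem~\ref{thm-unip-disc1} intervenes: after establishing complete integrability of the fiber direction by a Fried-dynamics argument exploiting unipotence, it runs a Carrière-type degeneration argument using the discompacity-one property of $\SL(2,\R)$ acting on the quotient $\R^3/\R v$ (saturating all convex sets by the fiber direction so that only the quotient discompacity matters), shows $D(\widetilde M)$ would otherwise be a half-space bounded by an invariant hyperplane, and excludes that with Goldman--Hirsch irreducibility. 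Without a substitute for this entire second half, the proposal does not prove the theorem.
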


The proof of the theorem consists in proving that the holonomy factorizes through the subgroup:
\begin{equation}
\R^3\rtimes(\R^2\rtimes\SL(2,\R)) = \R^3\rtimes
\begin{pmatrix}
1 & \begin{matrix}\R & \R \end{matrix}\\\begin{matrix} 0\\0\end{matrix} &\SL(2,\R)\end{pmatrix}
\end{equation}
and that all closed  $(\R^3\rtimes(\R^2\rtimes\SL(2,\R)),\R^3)$-manifolds are complete. It is a new completeness statement since \emph{a priori} the holonomy group is  not nilpotent (nor solvable) and might have discompacity $2$ in the sense of Carrière.

Applications of this theorem follow from  two main interpretations of the hypothesis.

\begin{corollary*}[\ref{cor-1}] 
Let $M$  be a  connected closed $(\R^3,\R^3\rtimes \SL(3,\R))$-manifold. 
\begin{itemize}
\item If the holonomy of $M$ has non trivial center,   then $M$ is complete.
\item If the automorphism group of $M$ is non discrete, then $M$ is complete.
\end{itemize}
\end{corollary*}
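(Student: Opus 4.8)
The plan is to deduce both assertions directly from Theorem~\ref{thm-3fold}: in each case I shall exhibit a non-trivial element $\phi\in\R^3\rtimes\SL(3,\R)$ that is centralized by the holonomy group $\Gamma=\rho(\pi_1(M))$ of $M$. The first item is immediate. If $\Gamma$ has non-trivial center, choose any $\phi\in Z(\Gamma)$ with $\phi\neq\id$; by definition of the center every element of $\Gamma$ commutes with $\phi$, so $\Gamma$ centralizes $\phi$, and $\phi\in\Gamma\subseteq\R^3\rtimes\SL(3,\R)$ since $M$ has parallel volume. Theorem~\ref{thm-3fold} then yields completeness.

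For the second item I would argue as follows. The automorphism group $\Aut(M)$ of a manifold endowed with an affine connection is a Lie group, so if it is non-discrete it contains a non-trivial one-parameter subgroup $\{f_t\}_{t\in\R}$ of affine automorphisms of $M$. Lift $\{f_t\}$ to a continuous family $\{\tilde f_t\}$ of diffeomorphisms of $\widetilde M$ with $\tilde f_0=\id$. For each $g\in\pi_1(M)$ the path $t\mapsto\tilde f_t\,g\,\tilde f_t^{-1}$ lies in the deck group, which is discrete, and equals $g$ at $t=0$; hence it is constant, i.e. every $\tilde f_t$ commutes with all deck transformations. Since $f_t$ is affine there is $A_t\in\R^3\rtimes\GL(3,\R)$ with $D\circ\tilde f_t=A_t\circ D$, and $t\mapsto A_t$ is a one-parameter subgroup of $\Aff(\R^3)$. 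Combining $D\circ\tilde f_t=A_t\circ D$ with the equivariance $D\circ g=\rho(g)\circ D$ and the relation $\tilde f_t g=g\tilde f_t$ gives $A_t\rho(g)=\rho(g)A_t$ on the non-empty open set $D(\widetilde M)$, hence on all of $\R^3$; thus $A_t$ centralizes $\Gamma$ for every $t$. If every $A_t$ were trivial then $D\circ\tilde f_t=D$, which would make $t\mapsto\tilde f_t(x)$ a continuous — hence constant — path in the discrete fibre $D^{-1}(D(x))$ for every $x$, forcing $\tilde f_t=\id$ and $f_t=\id$, contradicting non-triviality. So $A_{t_0}\neq\id$ for some $t_0$.

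It remains to check that $A_{t_0}$ lies in $\R^3\rtimes\SL(3,\R)$. Let $\omega$ be the parallel volume density of $M$, descended from the $\pi_1(M)$-invariant form $D^{*}\vol$ (invariant because $\Gamma\subseteq\R^3\rtimes\SL(3,\R)$). From $D\circ\tilde f_{t_0}=A_{t_0}\circ D$ one gets $\tilde f_{t_0}^{\,*}(D^{*}\vol)=\det L(A_{t_0})\cdot D^{*}\vol$, so $f_{t_0}^{\,*}\omega=\det L(A_{t_0})\cdot\omega$ on the closed manifold $M$. Integrating over $M$ and using that $f_{t_0}$ is a diffeomorphism isotopic to the identity forces $\det L(A_{t_0})=1$. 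Hence $\phi:=A_{t_0}$ is a non-trivial element of $\R^3\rtimes\SL(3,\R)$ centralized by $\Gamma$, and Theorem~\ref{thm-3fold} applies once more.

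The main obstacle will be the second step: turning a merely non-discrete automorphism group into an honest non-trivial affine transformation that commutes with the \emph{whole} holonomy — this is where the lift and the discreteness of the deck group do the work — and then ruling out that this transformation rescales the volume, for which compactness of $M$ is exactly what pins the determinant to $1$. Everything else, and in particular the first item, is a one-line consequence of the definitions together with Theorem~\ref{thm-3fold}.
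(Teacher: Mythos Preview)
Your proof is correct and follows essentially the same line as the paper: both items are reduced to Theorem~\ref{thm-3fold}, and for the second you lift automorphisms close to the identity and use discreteness of the deck group to force them to centralize $\pi_1(M)$ (the paper does this with a sequence $\tilde f_n\to\id$, you with a one-parameter subgroup, which is the same idea). The one genuine addition is your integration argument for $\det L(A_{t_0})=1$: the paper sidesteps this by \emph{defining} an automorphism of the $(\R^3,\R^3\rtimes\SL(3,\R))$-structure to already have $\chi(\tilde f)\in\R^3\rtimes\SL(3,\R)$, so your step is unnecessary under that convention but correct (and useful) if one starts from the broader affine notion.
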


The first case of a non trivial center happens when the manifold is a Seifert bundle over an orientable surface.\footnote{Note that by \cite{GH} and \cite{Smillie}, the holonomy of the fiber never vanishes when the flat affine structure has parallel volume.} We deduce  the following corollary which was known by  different methods following \cite{GH2} and \cite{CDM}.

\begin{corollary*}[\ref{cor-2}]
Let $M$ be a connected closed  Seifert  bundle over an orientable surface. Then $M$ verifies the Markus conjecture.
\end{corollary*}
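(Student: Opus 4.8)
Here is how I would prove Corollary~\ref{cor-2}. The plan is to put the holonomy into the situation of Corollary~\ref{cor-1}, by exhibiting a non-trivial element of the holonomy group that commutes with the whole group. Let $(D,\rho)$ be the flat affine structure with parallel volume, so that $\rho$ takes its values in $\R^3\rtimes\SL(3,\R)$. A parallel volume form is a nowhere-vanishing $3$-form, hence $M$ is orientable. Being a closed orientable Seifert fibered space over an orientable base, $M$ has fundamental group fitting into a central extension
\[
1\longrightarrow\langle h\rangle\longrightarrow\pi_1(M)\longrightarrow\pi_1^{\mathrm{orb}}(B)\longrightarrow 1,
\]
where $h$ denotes the class of a generic fiber; the orientability of both $M$ and of the base is precisely what makes the conjugation action of $\pi_1(M)$ on $\langle h\rangle$ trivial, i.e.\ $h$ central in $\pi_1(M)$. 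This is read off the standard presentation of a Seifert fibered three-manifold group: over an orientable base, every generator commutes with the fiber class.

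Next I would invoke the non-vanishing of the holonomy of the fiber recalled in the footnote, namely the fact of Goldman--Hirsch~\cite{GH} and Smillie~\cite{Smillie} that a closed flat affine manifold with parallel volume has $\rho(h)\neq\id$. Heuristically, $\rho(h)=\id$ would make $D$ descend through the free and properly discontinuous action of $\langle h\rangle$ and force $\rho$ to factor through the essentially two-dimensional group $\pi_1^{\mathrm{orb}}(B)$, which is incompatible with the existence of a parallel volume form on a closed manifold; a completely self-contained proof of the corollary would have to reprove this, and that is the only step requiring genuine work. Granting it, set $\phi:=\rho(h)\in\R^3\rtimes\SL(3,\R)$. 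Then $\phi\neq\id$ and, for every $g\in\pi_1(M)$,
\[
\rho(g)\,\phi\,\rho(g)^{-1}=\rho(ghg^{-1})=\rho(h)=\phi,
\]
so $\phi$ is centralized by the entire holonomy group; equivalently, the holonomy group has non-trivial center.

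Applying the first item of Corollary~\ref{cor-1} — that is, Theorem~\ref{thm-3fold} with this $\phi$ — yields that $M$ is complete, i.e.\ that $M$ satisfies the Markus conjecture. Thus, modulo the footnote fact, the corollary is a formal consequence of the main theorem, and I do not expect any serious obstacle: the two things to get right are the centrality of the fiber class (which uses that the base is orientable, and that $M$ itself is orientable, guaranteed by parallel volume) and the non-triviality of $\rho(h)$ (which uses that the volume is parallel), both of which are standard.
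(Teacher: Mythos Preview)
Your proposal is correct and follows essentially the same route as the paper: exhibit the regular fiber class as a central element of $\pi_1(M)$, invoke Goldman--Hirsch/Smillie to see that its holonomy is non-trivial, and then apply Theorem~\ref{thm-3fold} via Corollary~\ref{cor-1}. The only cosmetic difference is that the paper phrases the non-vanishing step through the cohomological-dimension inequality (the holonomy group of a closed parallel-volume affine $n$-manifold has cohomological dimension $\geq n$, so the image cannot factor through the surface group), whereas you give the equivalent geometric heuristic; and you spell out the orientability reasons for centrality of the fiber, which the paper leaves implicit.
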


\paragraph{Note}
Fried and Goldman~\cite{FG} have classified \emph{complete} affine three-manifolds.
So one may wonder if our hypothesis is credible regarding this classification. In fact,
up to finite index any complete  affine three-manifold verifies our hypothesis. 

Indeed, Fried and Goldman describe three families of subgroups of $\R^3\rtimes \SL(3,\R)$ where the holonomy takes its values, up to a finite cover. The three families are:
\begin{equation}
\R^3\rtimes \begin{pmatrix} 1 & \star & \star \\ 0 & 1 & \star \\ 0 & 0 & 1\end{pmatrix}, \; \R^3\rtimes \begin{pmatrix} 1 & \star & \star \\ 0 & \lambda  & 0 \\ 0 & 0 & \frac 1\lambda\end{pmatrix}, \; \R^3\rtimes \begin{pmatrix} 1 &\star & \star \\ 0 & \cos\theta & -\sin \theta \\ 0 & \sin\theta & \cos\theta\end{pmatrix}.
\end{equation}
Hence, for each family, the translation along the $x$-axis is an automorphism and, of course, the holonomy lies in the highlighted group $\R^3\rtimes(\R^2\rtimes\SL(2,\R))$.

\paragraph{Note}
The existence a non discrete automorphism group may be compared with Ghys' approach in~\cite{Ghys}.
In~\cite{Ghys}, Ghys proved completeness of closed manifolds when an Anosov flow has smooth contracting and expanding line distributions.  He constructs a geometric structure where the Anosov flow becomes a one-parameter family of automorphisms. 


\paragraph{Acknowledgment}The author is grateful to Elisha Falbel and Charles Frances for continuous discussions.

\tableofcontents

\section{Closed flat affine manifolds}

In this short section, we give elementary tools for the treatment of closed flat affine manifolds. 
Notably what one can deduce from the hypothesis of the holonomy centralizing a transformation when the manifold is closed.

\subsection{Holonomy centralizing a transformation}

An important general result about flat affine manifolds with parallel volume is the following.
\begin{theorem}[Goldman-Hirsch~\cite{GH}]
Let $M$ be a connected closed flat affine manifold with parallel volume. Then the holonomy of $M$ is \emph{irreducible}: it cannot preserve any affine subspace.
\end{theorem}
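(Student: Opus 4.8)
The plan is to argue by contradiction: suppose the holonomy $\Gamma:=\rho(\pi_1 M)$ preserves a proper affine subspace. Conjugating the affine structure by a translation of $\R^n$ changes neither the compactness of $M$ nor the hypothesis that the linear holonomy lies in $\SL(n,\R)$, so I may assume the preserved subspace passes through the origin, i.e.\ is a linear subspace $V$ with $k:=\dim V$, $0\le k\le n-1$ and $m:=n-k\ge 1$. If $\gamma$ is affine and preserves $V$, then $L(\gamma)V=V$ and $t(\gamma)=\gamma(0)\in V$; thus the linear holonomy is block upper triangular relative to $V$ and any complement, and all translational parts lie in $V$. Consequently the composite $\bar D:=\pi\circ D\colon\widetilde M\to\R^n/V\cong\R^m$, with $\pi$ the linear quotient, is a submersion equivariant for the \emph{linear} representation $\bar\rho\colon\pi_1(M)\to\GL(m,\R)$ induced on $\R^n/V$. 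Write $\chi:=\det\bar\rho$; since $\det L(\gamma)=1$, we have $\chi(\gamma)^{-1}=\det\bigl(L(\gamma)|_V\bigr)$.

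First dispose of the radiant case $k=0$. Then $V=0$, all translational parts vanish, and $\Gamma\subset\GL(n,\R)$. The Euler vector field $\xi=\sum_i x^i\partial_{x^i}$ on $\R^n$ is invariant under every linear map, hence descends through $D$ to a vector field $X$ on $M$; from $\mathcal L_\xi(dx^1\wedge\cdots\wedge dx^n)=n\,dx^1\wedge\cdots\wedge dx^n$ one gets $\mathcal L_X\omega=n\omega$ for the parallel volume form $\omega=D^*(dx^1\wedge\cdots\wedge dx^n)$ of $M$. But $\int_M\mathcal L_X\omega=\int_M d(\iota_X\omega)=0$ by Stokes, while $\int_M n\omega=n\,\vol(M)>0$ — a contradiction.

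For $1\le k\le n-1$ I would run the ``transverse'' analogue of this argument relative to the foliation $\mathcal F$ of $M$ whose leaves are the fibres of $\bar D$. Using the transverse Euler field $\xi_W=\sum_{i>k}x^i\partial_{x^i}$ one forms on $\widetilde M$ the $m$-form $\beta:=\bar D^*(\mathrm{vol}_{\R^m})=D^*(dx^{k+1}\wedge\cdots\wedge dx^n)$ — nowhere zero because $\bar D$ is a submersion, closed, and transforming by the character $\chi$ — together with the $(m-1)$-form $\eta:=D^*\bigl(\iota_{\xi_W}(dx^{k+1}\wedge\cdots\wedge dx^n)\bigr)$, which also transforms by $\chi$ (the correction terms, obtained by contracting $\xi_W$ against the $V$-directions, vanish since $\iota_{\partial_i}dx^j=0$ for $i\le k<j$) and satisfies $d\eta=m\beta$. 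Dually, the parallel volume together with $\beta$ determines a leafwise $k$-volume form transforming by $\chi^{-1}$, closed along the leaves (it is leafwise of top degree), such that wedging any extension of it with $\beta$ recovers $\omega$. Because the $\chi$ and $\chi^{-1}$ twists cancel, the degree-$(n-1)$ combination of these data should descend to a genuine form $\sigma$ on $M$ with $d\sigma=\pm m\,\omega$, whence $m\,\vol(M)=0$ by Stokes — the desired contradiction for every $k$.

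The step that needs real care, and which I expect to be the main obstacle, is the descent in that last sentence. There is in general no $\Gamma$-invariant complement to $V$: the extension $0\to V\to\R^n\to\R^n/V\to 0$ of $\pi_1(M)$-modules need not split, so the off-diagonal block of the linear holonomy need not vanish, and the ``leafwise $k$-volume'' and the extension of $\eta$ to honest forms on $\widetilde M$ are only well-defined modulo the ideal of forms killed by the leaf-tangent directions; one must check that the degree-$(n-1)$ combination truly descends to $M$, and not merely up to a coboundary that could spoil the Stokes conclusion. The clean way to organize this is to work throughout with the flat bundle $\bar E=(\R^n/V)_{\bar\rho}$ and its determinant line $\mathcal L_\chi=\Lambda^m\bar E$: the form $d\bar D$ represents the relative radiance class $c_{M,V}\in H^1(M;\bar E)$, which vanishes precisely because we have reduced to the case of an invariant affine subspace parallel to $V$, and the task is to convert this vanishing — using Poincaré duality with local coefficients, pairing $c_{M,V}$ against the transverse Euler form $\eta\in\Omega^{m-1}(M;\mathcal L_\chi)$, and using $\mathcal L_\chi^{-1}\otimes\mathcal L_\chi=\mathcal O$ together with the parallel volume to land in $H^n(M;\R)\cong\R$ — into the equality $\vol(M)=0$. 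Controlling the twisting and the non-canonical choices in this pairing is where the compactness of $M$ and the parallel-volume hypothesis genuinely combine, and is the crux of the proof.
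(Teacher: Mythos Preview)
The paper does not prove this statement: it is quoted as a theorem of Goldman--Hirsch with a citation to~\cite{GH} and used as a black box throughout. So there is no ``paper's own proof'' to compare against.

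As for your attempt: the radiant case $k=0$ is complete and correct --- this is the classical Euler-vector-field argument. For $1\le k\le n-1$ your outline is in the spirit of the Goldman--Hirsch proof (radiance obstruction, twisted coefficients, Poincar\'e duality pairing), and you correctly isolate the real issue: the form $\alpha=D^*(dx^1\wedge\cdots\wedge dx^k)$ is only $\chi^{-1}$-equivariant \emph{modulo} the ideal generated by $dx^{k+1},\dots,dx^n$, because the off-diagonal block $B$ of the linear holonomy contributes terms in that ideal; and those error terms do \emph{not} obviously cancel when wedged with $\eta$, since $\eta$ contains the factors $x^j$ (not just $dx^j$). So the naive product $\alpha\wedge\eta$ does not descend, and you have not actually produced the primitive $\sigma$ with $d\sigma=\pm m\,\omega$. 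Your final paragraph names the right repair --- interpret everything in the flat bundle $\bar E$, use that the relative radiance class vanishes, and pair via Poincar\'e duality with coefficients in $\mathcal L_\chi$ --- but this is a plan, not a proof: the pairing and the identification with $\vol(M)$ still have to be written down and checked. In short, the $k=0$ case is done, the $k\ge 1$ case is a correct sketch with the key step left as a (non-trivial) exercise; to complete it you should consult the original Goldman--Hirsch paper, where precisely this cohomological bookkeeping is carried out.
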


From this, we can deduce what kind of affine transformations can be centralized by the holonomy. For instance, it is clear that this implies that the centralized transformation cannot have fixed points.

\begin{lemma}
Two affine transformations $c+f(x)$ and $b+g(x)$ commute if, and only if, $[f,g]=\id$ and
\begin{equation}
c-g(c) + f(b)-b = 0.
\end{equation}
\end{lemma}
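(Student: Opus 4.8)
The plan is to expand the two compositions $u\circ v$ and $v\circ u$, where $u(x) = c+f(x)$ and $v(x) = b+g(x)$, and to use that an affine transformation of $\R^n$ is uniquely determined by the pair consisting of its linear part and its translation part (this is precisely the semidirect product structure of $\R^n\rtimes\GL(n,\R)$). Consequently $u$ and $v$ commute if and only if the linear parts of $u\circ v$ and $v\circ u$ agree \emph{and} the translation parts agree, and it suffices to read off these two pieces of data from the explicit formulas.

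First I would compute $u\circ v(x) = c + f(b+g(x)) = (c+f(b)) + (f\circ g)(x)$, so that $u\circ v$ has linear part $f\circ g$ and translation part $c + f(b)$. Symmetrically, $v\circ u(x) = b + g(c+f(x)) = (b+g(c)) + (g\circ f)(x)$, with linear part $g\circ f$ and translation part $b + g(c)$. Matching linear parts gives $f\circ g = g\circ f$, i.e. $[f,g] = \id$; matching translation parts gives $c + f(b) = b + g(c)$, which rearranges to $c - g(c) + f(b) - b = 0$, exactly the claimed identity. The converse is immediate: if both identities hold, the same decomposition shows $u\circ v = v\circ u$.

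There is no genuine obstacle here — the statement is essentially a one-line verification once the semidirect-product bookkeeping is in place. The only point that deserves care is keeping track of which linear map acts on which translation vector (namely $f$ applied to $b$ and $g$ applied to $c$), since transposing those two would yield a different, inequivalent condition; writing both compositions out in full, rather than manipulating the commutator abstractly, avoids this pitfall.
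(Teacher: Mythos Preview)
Your proof is correct and follows exactly the same approach as the paper: compute both compositions explicitly and compare the linear and translation parts. The paper's own proof is nothing more than the two lines $(c+f(x))\circ(b+g(x)) = c+f(b)+fg(x)$ and $(b+g(x))\circ(c+f(x)) = b+g(c)+gf(x)$, which is precisely your computation.
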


\begin{proof}
It is deduced from:
\begin{align}
(c+f(x))\circ (b+g(x)) &= c + f(b) + fg(x)\\
(b+g(x))\circ(c+f(x)) &= b + g(c) + gf(x).\qedhere
\end{align}
\end{proof}

\begin{lemma}
For any affine transformation $\phi = b + g(x)$, one can assume up to a conjugation by a translation, that $b$ belongs to $\cE_1(g)$, the eigenspace of $g$ associated to the eigenvalue $1$.
\end{lemma}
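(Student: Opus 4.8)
The plan is entirely linear-algebraic: I would track how the translation part of $\phi$ transforms under conjugation by a translation, and then exploit the spectral decomposition of the linear part $g$.

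First I would record the elementary identity. Writing $\tau_v$ for the translation $x \mapsto x+v$, one computes directly that
\begin{equation*}
\tau_v \circ \phi \circ \tau_v^{-1}(x) = \bigl(b + (\id - g)(v)\bigr) + g(x).
\end{equation*}
Thus conjugation by a translation leaves the linear part $g$ unchanged and moves the translation part $b$ to an arbitrary point of the affine subspace $b + (\id - g)(\R^n)$. It therefore suffices to show that this coset meets $\cE_1(g)$, i.e.\ that some $v$ makes $b + (\id-g)(v)$ lie in the eigenspace of $g$ for the eigenvalue $1$.

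Next I would use the $g$-invariant splitting $\R^n = \cE_1(g) \oplus V$, where $V$ is the sum of the generalized eigenspaces of $g$ attached to the eigenvalues different from $1$ (formed over $\C$ and descended to $\R$). By construction $\id - g$ has no kernel on $V$, so it restricts to an automorphism of $V$, while it preserves both summands. Decomposing $b = b_1 + b_2$ accordingly, with $b_1 \in \cE_1(g)$ and $b_2 \in V$, I would take the unique $v \in V$ with $(\id - g)(v) = -b_2$; then $b + (\id - g)(v) = b_1 \in \cE_1(g)$, and conjugating $\phi$ by $\tau_v$ puts it in the announced form.

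I do not expect any real obstacle here: the only thing to be careful about is that $\id - g$ need not be invertible on all of $\R^n$ — so one cannot simply solve $(\id-g)(v) = -b$ — and the point is to solve the equation only on the canonical $g$-invariant complement of $\cE_1(g)$, where $\id - g$ is invertible.
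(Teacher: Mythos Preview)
There is a genuine gap in your decomposition step. The splitting $\R^n=\cE_1(g)\oplus V$, with $V$ the sum of the generalized eigenspaces of $g$ for the eigenvalues $\neq 1$, is a decomposition of all of $\R^n$ only when the eigenvalue $1$ is semisimple for $g$. If $g$ has a Jordan block of size $\geq 2$ for the eigenvalue $1$, then $\cE_1(g)$ is strictly smaller than the generalized $1$-eigenspace and $\cE_1(g)\oplus V\subsetneq\R^n$. Your component $b_2$ then lives in a space on which $\id-g$ is \emph{not} invertible, and the equation $(\id-g)(v)=-b_2$ need not be solvable.

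Concretely, take $n=2$, $g=\begin{pmatrix}1&1\\0&1\end{pmatrix}$ and $b=e_2$. Here $\cE_1(g)=\R e_1$, while for any $v=(v_1,v_2)$ one has $b+(\id-g)(v)=(-v_2,1)$, whose second coordinate is always $1$. So no conjugate of $\phi=e_2+g(x)$ by a translation has its translation part in $\cE_1(g)$, and the lemma as literally stated fails for this $\phi$.

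The paper's own proof follows essentially the same route --- it passes to the quotient $\R^n/\cE_1(g)$ and implicitly reapplies the case $\cE_1=\{0\}$ there --- and it runs into exactly the same obstruction, since on that quotient the induced linear map can still have $1$ as an eigenvalue. What both arguments do establish is the weaker (and true) statement that $b$ can be moved into the \emph{generalized} $1$-eigenspace $\ker\bigl((g-\id)^n\bigr)$; this is precisely what your proof yields once $V$ is taken to be the genuine $g$-invariant complement.
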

\begin{proof}
We look for a fixed point (which should give $b=0$ when $\cE_1(g)=\{0\}$).
\begin{align}
b + g(x) &= x\\
\iff (g-\id)(x) &= -b
\end{align}
First assume that $\cE_1(g)=\{0\}$. Then let $b'=(g-\id)^{-1}(b)$. It verifies $(g-\id)(-b') = g(-b') +b' = -b$. Hence, conjugating $\phi$ by the translation by $b'$ gives indeed:
\begin{equation}
b + g(x-b') + b' =  g(x).
\end{equation}
In general, by choosing a supplement $\cE_1(g)\oplus F = \R^n$, one can consider $\phi$ acting on the quotient space $\R^n/\cE_1(g) \simeq F$. It acts as an affine translation since $g$ acts as the identity on $\cE_1(g)$:
\begin{align}
\forall u\in \cE_1(g), \; \phi(x+u) &= b + g(x)+g(u) \\
&= b + g(x) + u = \phi(x) + u.
\end{align}
Therefore up to a translation in $F$, $b$ can be let in $\cE_1(g)$.
\end{proof}

\begin{proposition}\label{prop-transl}
Let $M$ be a connected closed affine flat manifold with parallel volume.
Let $\phi(x) = b + g(x)$ be a non-trivial transformation commuting with the holonomy of $M$ with $b\in \cE_1(g)$. Then $b\neq 0$.
\end{proposition}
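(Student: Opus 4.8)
The plan is to argue by contradiction, so suppose $b=0$. Then $\phi$ is the linear map $g$, and since $\phi$ is non-trivial, $g\neq\id$. The goal is to produce, out of this situation, a proper affine subspace of $\R^3$ preserved by the holonomy, which is forbidden by the Goldman--Hirsch irreducibility theorem quoted above.

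First I would unwind the commutation hypothesis with the commutation lemma above. Writing a generic holonomy element as $\gamma(x)=c+f(x)$, commutation with $\phi=g$ amounts to $fg=gf$ together with $c-g(c)+f(b)-b=0$, which with $b=0$ becomes $c\in\cE_1(g)$. Hence every linear part $f$ of the holonomy commutes with $g$, and every translational part $c$ lies in $\cE_1(g)$.

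Next I would show that $V:=\cE_1(g)=\ker(g-\id)$ is invariant under the holonomy. It is invariant under each linear part: if $gv=v$ then $g(fv)=f(gv)=fv$, so $fv\in V$; this is where it matters that $V$ is the genuine eigenspace and not a larger generalized eigenspace. Combined with $c\in V$, we get $\gamma(v)=c+f(v)\in V$ for every $v\in V$, so $V$ is a holonomy-invariant affine subspace (it is a linear subspace, in particular affine, and it is nonempty since $0\in V$). But $g\neq\id$ forces $V\subsetneq\R^3$, so $V$ is a \emph{proper} affine subspace preserved by the holonomy, contradicting the theorem of Goldman--Hirsch. Therefore $b\neq 0$.

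Honestly there is no serious obstacle in this statement; it is a short consequence of the commutation lemma together with irreducibility. The only subtlety is the bookkeeping above: using $\cE_1(g)$ so that invariance under each commuting $f$ is automatic, and noticing that non-triviality of $\phi$ in the case $b=0$ is precisely $g\neq\id$, which is exactly what makes the invariant subspace proper.
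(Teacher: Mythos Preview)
Your proof is correct and essentially identical to the paper's: both argue by contradiction that if $b=0$ then the holonomy preserves the proper affine subspace $\cE_1(g)$, contradicting Goldman--Hirsch irreducibility. The only cosmetic difference is that the paper checks invariance in one line via $\phi\gamma(u)=\gamma\phi(u)=\gamma(u)$ for $u\in\cE_1(g)$ rather than splitting into linear and translational parts with the commutation lemma; note also that the proposition is stated in arbitrary dimension, so your $\R^3$ should read $\R^n$, though the argument is unchanged.
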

\begin{proof}
 Otherwise, $\phi(x) = g(x)$ and for any holonomy transformation $\gamma\in \Gamma$, we would have 
 \begin{equation}
 \forall u\in \cE_1(g), \; \phi\gamma(u)=\gamma \phi(u) = \gamma(u).
 \end{equation}
 Therefore $\gamma$ would preserve $\cE_1(g)$ which is a strict subspace of $\R^n$ since $\phi$ is not trivial. It is in contradiction with the irreducibility of the holonomy by Goldman-Hirsch~\cite{GH}.
\end{proof}

\subsection{Visibility and convexity}

We will assume that $M$ is a flat affine manifold. We denote by $D$ its developing map (which is determined up to an affine conjugation of $\R^3$).

In flat affine geometry, a powerful notion is the convexity of sets. Because affine segments are determined by their endpoints, convexity allows global arguments. 
Classic references are  for instance Benzécri~\cite{Benzecri} and Koszul~\cite{Koszul}. Two references closer to our tools and  our point of view are  Fried~\cite{Fried} and Carrière~\cite{Carriere}.

\begin{definition}
Let $M$ be a flat affine manifold. 
\begin{itemize}
\item A \emph{geodesic} $\gamma\colon \iI \to \widetilde M$ is a curve such that $D(\gamma)$ is an affine segment of the form $D(\gamma)(t) = x + tv$.
\item A \emph{convex set} $C\subset \widetilde M$ is a set where $D|_C$ is injective and $D(C)$ is a convex of $\R^n$. (Note that any couple $(x,y)\in C^2$ is related by a geodesic when $C$ is convex.)
\item A \emph{convex set} $C\subset M$ is a set $C\subset U$ with $U$ trivialisable, such that  any lift $C_i\subset \widetilde M$ of $C$ is convex. (Note that, this is true simultaneously for every lift since the affine group preserves the convex sets of $\R^n$.)
\item Let $x\in\widetilde M$. A point $z\in \R^n$ is \emph{visible} from $x$ if  there exists a geodesic curve $\gamma\colon\iI \to \widetilde M$ such that $\gamma(0)=x$ and $D(\gamma(1)) = z$.
\item Let $x\in \widetilde M$. A set $S\subset \R^n$ is \emph{visible} from $x$ if every point of $S$ is visible from $x$.
\item Let $S\subset \widetilde M$ be a set. A set $C\subset \R^n$ is \emph{completely visible} from $S$ if $C$ is visible from any point of $S$.
\end{itemize}
\end{definition}

\begin{lemma}
Let $C\subset \R^n$ be a set visible from a point $x\in \widetilde M$. Consider the set $B\subset \widetilde M$ given by the endpoints of  all the geodesics started at $x$ and with endpoints developed in $C$. The developing map is injective on $B$.
\end{lemma}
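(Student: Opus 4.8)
The plan is to reduce the statement to a uniqueness property for lifts of affine segments. First I would record the following: given $x\in\widetilde M$ and a vector $v\in\R^n$, there is \emph{at most one} geodesic $\gamma\colon\iI\to\widetilde M$ with $\gamma(0)=x$ and $D(\gamma(t))=D(x)+tv$. Indeed, if $\gamma$ and $\gamma'$ are two such geodesics, set $A=\{t\in\iI : \gamma(t)=\gamma'(t)\}$. Then $0\in A$ and $A$ is closed by continuity; moreover $A$ is open, because near any $t_0\in A$ the developing map is injective on some neighborhood $V$ of the common value $p=\gamma(t_0)=\gamma'(t_0)$ (recall $D$ is a local diffeomorphism), and for $t$ close to $t_0$ both $\gamma(t)$ and $\gamma'(t)$ lie in $V$ with $D(\gamma(t))=D(\gamma'(t))$, forcing $\gamma(t)=\gamma'(t)$. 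By connectedness of $\iI$, $A=\iI$, and in particular $\gamma(1)=\gamma'(1)$.

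Granting this, I would conclude as follows. Let $y_1,y_2\in B$ with $D(y_1)=D(y_2)=:z$. By the definition of $B$ there exist geodesics $\gamma_i\colon\iI\to\widetilde M$ with $\gamma_i(0)=x$, $\gamma_i(1)=y_i$ and $D(\gamma_i(t))=D(x)+tv_i$ for suitable vectors $v_i$; evaluating at $t=1$ gives $v_i=z-D(x)$, so $v_1=v_2$. Hence $\gamma_1$ and $\gamma_2$ are geodesics issued from $x$ developing onto the same segment, so the uniqueness above yields $\gamma_1=\gamma_2$, whence $y_1=\gamma_1(1)=\gamma_2(1)=y_2$. Thus $D$ is injective on $B$. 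Note that the set $C$ itself plays no role beyond specifying which endpoints are admitted; only the equality $D(y_1)=D(y_2)$ is actually used.

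The sole delicate point is the uniqueness of geodesic lifts, and it is exactly the standard open–closed argument above, powered by the fact that $D$ is a local diffeomorphism; everything else is bookkeeping. One could alternatively invoke the general uniqueness of lifts of paths through a local diffeomorphism once a lift is known to exist, but spelling out the connectedness argument keeps the proof self-contained.
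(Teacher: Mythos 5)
Your proof is correct and follows essentially the same route as the paper: two endpoints of $B$ with the same image force the two developed segments to coincide, and then the lifted geodesics agree, so the endpoints agree. The only difference is that you spell out, via the open--closed argument, the uniqueness of a geodesic lift with prescribed starting point and development, a step the paper leaves implicit in ``the two affine segments are equal, hence $\gamma_1=\gamma_2$.''
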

\begin{proof}
Two points $a,b\in B$ correspond to two geodesics $\gamma_1,\gamma_2$ started at $x$ and ending at $a$ and $b$ respectively. Both $D(\gamma_1)$ and $D(\gamma_2)$ start at $D(x)$, hence if they have same endpoints, namely $D(a)$ and $D(b)$, then the two affine segments are equal, hence $\gamma_1=\gamma_2$ and $a=b$.
\end{proof}

Thus, when $C$ is visible, it suffices for it to be convex in $\R^n$ in order for $B$ to be convex in $\widetilde M$.

\begin{lemma}
If $C_1,C_2\subset\widetilde M$ are two convex sets with $C_1\cap C_2\neq\emptyset$, then the developing map is injective on $C_1\cup C_2$.
\end{lemma}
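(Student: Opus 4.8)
The plan is to reduce injectivity on the union to the uniqueness of a geodesic with prescribed starting point and prescribed developed image. First I would dispose of the trivial cases: if two points $a,b\in C_1\cup C_2$ with $D(a)=D(b)$ both lie in $C_1$, or both lie in $C_2$, then injectivity of $D$ on that single convex set already gives $a=b$. So the only case to treat is $a\in C_1$ and $b\in C_2$, and here I would use the hypothesis by fixing a point $c\in C_1\cap C_2$.

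Next, using that $C_1$ and $C_2$ are convex, I would join $c$ to $a$ by a geodesic $\gamma_1\colon\iI\to\widetilde M$ inside $C_1$ and $c$ to $b$ by a geodesic $\gamma_2\colon\iI\to\widetilde M$ inside $C_2$; by definition $D\circ\gamma_1$ is the affine segment from $D(c)$ to $D(a)$ and $D\circ\gamma_2$ the one from $D(c)$ to $D(b)$. Since $D(a)=D(b)$, the two developed curves coincide: both equal $t\mapsto D(c)+t(D(a)-D(c))$.

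The key step — and the only one requiring a genuine argument, essentially the one already used for the first lemma of this subsection — is that two geodesics issued from the same point $c$ with identical developed image are equal. This is the standard lift-uniqueness fact: $D$ is a local diffeomorphism, so $\{t\in\iI : \gamma_1(t)=\gamma_2(t)\}$ is nonempty (it contains $0$), closed, and open, since near a common value $y=\gamma_1(t_0)=\gamma_2(t_0)$ one inverts $D$ on a neighbourhood of $y$ and reads off $\gamma_1=\gamma_2$ from $D\circ\gamma_1=D\circ\gamma_2$; by connectedness of $\iI$ it is all of $\iI$. Then $a=\gamma_1(1)=\gamma_2(1)=b$, so $D$ is injective on $C_1\cup C_2$. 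I do not expect any real obstacle here; the statement is essentially a repackaging of the earlier visibility lemma together with the convexity of $C_1$ and $C_2$.
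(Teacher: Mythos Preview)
Your proof is correct and follows the same approach as the paper: pick a common point, join it to the two candidate points by geodesics in $C_1$ and $C_2$ respectively, observe that the developed segments coincide, and conclude by uniqueness of lifts through the local diffeomorphism $D$. You spell out the lift-uniqueness step more explicitly than the paper (which phrases it as ``both are in $C_1$ since their images are both in $D(C_1)$'' and implicitly leans on the argument of the previous lemma), but the argument is the same.
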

\begin{proof}
Let $c_1\in C_1$ and $c_2\in C_2$ with $D(c_1)=D(c_2)$. Let $x\in C_1\cap C_2$. Then the geodesic from $x$ to $c_1$ and the geodesic from $x$ to $c_2$ are sent to the same geodesic in $\R^n$. They must be equal in $\widetilde M$ since both are in $C_1$ since their images are  both in $D(C_1)$.
\end{proof}

Note that when  $A,B\subset \R^n$ are two convexes, then 
\begin{equation}
A+B = \left\{a+b\,\lvert \forall a\in A, \forall b\in B\right\}
\end{equation}
is a convex set of $\R^n$.

\begin{lemma}\label{lem-sumconv}
Let $C\subset \widetilde M$ be convex and $F\subset \R^n$  a linear subspace. If $D(x)+F$ is visible from any $x\in C$, then there exists a convex $V$ such that $C\subset V$ and $D(V) = D(C) + F$.
\end{lemma}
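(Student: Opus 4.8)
The plan is to build $V$ as the union, over $x\in C$, of the set $B_x\subset\widetilde M$ consisting of the endpoints of all geodesics issued from $x$ and developed into the affine subspace $D(x)+F$. By hypothesis $D(x)+F$ is visible from $x$, so the two preceding visibility lemmas apply: $D$ is injective on $B_x$, and since $D(x)+F$ is convex (being an affine subspace) $B_x$ is a convex subset of $\widetilde M$ with $D(B_x)=D(x)+F$; moreover $x\in B_x$ via the constant geodesic. Setting $V=\bigcup_{x\in C}B_x$, one immediately gets $C\subset V$ and
\begin{equation}
D(V)=\bigcup_{x\in C}\bigl(D(x)+F\bigr)=D(C)+F,
\end{equation}
which is convex as the sum of the two convex sets $D(C)$ and $F$.

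It then remains only to check that $D$ is injective on $V$, and this is the single point requiring an argument. I would take $a\in B_x$ and $b\in B_y$ with $D(a)=D(b)=:p$ and observe that $p\in(D(x)+F)\cap(D(y)+F)$. Since these two sets are parallel affine subspaces, they are either disjoint or equal, hence they coincide and $D(y)-D(x)\in F$. Now I use the convexity of $C$: the geodesic from $x$ to $y$ inside $C$ develops onto the segment $[D(x),D(y)]$, which lies in $D(x)+F$ because both of its endpoints do; therefore $y\in B_x$, and symmetrically $x\in B_y$. In particular $B_x\cap B_y\neq\emptyset$, so by the gluing lemma for convex subsets of $\widetilde M$ the developing map is injective on $B_x\cup B_y$, forcing $a=b$ (when $x=y$ this is just the injectivity of $D$ on $B_x$ already recorded).

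The crux — the only step that is more than bookkeeping — is precisely this consistency of the family $\{B_x\}_{x\in C}$: two fibers $D(x)+F$ and $D(y)+F$ never meet unless they coincide, and in the coinciding case the convexity of $C$ forces the segment $[D(x),D(y)]$ into the common fiber, so that $B_x$ and $B_y$ genuinely share the points $x$ and $y$. Once this is in place, everything else is immediate from the two visibility lemmas and from the stability of convexity under Minkowski sums.
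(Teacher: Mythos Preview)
Your proof is correct and follows essentially the same route as the paper: build $V$ as the union of the convex ``fibers'' $B_x$ (the paper's $V_x$), reduce injectivity of $D$ on $V$ to showing that if $D(a)=D(b)$ with $a\in B_x$, $b\in B_y$ then $D(x)-D(y)\in F$, and use the geodesic from $x$ to $y$ in $C$ (which then develops into $D(x)+F$) to see that $B_x$ and $B_y$ intersect. The only cosmetic difference is that you finish by invoking the intersection lemma for two overlapping convex sets explicitly, whereas the paper phrases the last step as ``$v_y\in V_x$'' and concludes by injectivity on a single $V_x$; the underlying argument is the same.
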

\begin{proof}
For every $x\in C$, $D(x)+F$ is visible and convex, hence we denote $V_x\ni x$ the convex antecedent in $\widetilde M$. Consider the union  $V = \bigcup V_x$. If the developing map is injective on $V$ then it is the desired convex since the image is $D(C)+F$ by construction. 

Let $v_x\in V_x$ and $v_y\in V_y$ such that $D(v_x)=D(v_y)$. Then $D(v_x) = D(x) +u_1$ and $D(v_y) = D(y)+u_2$ for $u_1,u_2\in F$. They verify by hypothesis 
\begin{equation}
D(x)-D(y) = u_2 - u_1.
\end{equation}

We show that $v_y\in V_x$. By convexity of $V_x$ it will conclude. The previous equation implies that for any $t\in \iI$:
\begin{align}
D(x) - t (D(x)-D(y)) & = (1-t)D(x) + tD(y)\\
& = D(x)  - t(u_2-u_1)
\end{align}
Hence the geodesic $\gamma$ from $x$ to $y$ is developed to $D(\gamma)(t) = D(x) - t(u_2-u_1)$. But $-t(u_2-u_1)\in F$ hence $\gamma$ is completely included in $V_x$. Thus $y\in v_x$, which concludes.
\end{proof}

This lemma motivates the following definition.

\begin{definition}
Let $S\subset \widetilde M$ be a set and $F\subset \R^n$ be a linear subspace. Then $F$ is  \emph{(completely) integrable} from $S$ if $D(x) + F$ is visible for any $x\in S$.
\end{definition}

\section{Completeness of some fibered affine structures}

In this section, we show what will be the final step of theorem~\ref{thm-3fold}. We will have to show completeness of manifolds with particular holonomies.
We show this completeness in a greater generality.

\begin{definition}
Let $H\subset \GL(n,\R)$ be a subgroup. Its \emph{discompacity} is defined as follows. For any sequence $h_n\in H$, the singular values of $h_n$, that is to say the eigenvalues of $h_n^Th_n$, might tend to zero. The discompacity of $H$ is the maximal number of singular values of $h_n$ that can tend to zero for the arbitrary choice of a sequence $h_n\in H$.
\end{definition}

\begin{definition}
Let $G\subset \SL(n,\R)$ be a subgroup with matrices of the form:
\begin{equation}
\begin{pmatrix}
U & L \\
0 & D
\end{pmatrix}
=
\begin{pmatrix}
\begin{matrix}
1 & \star & \star \\
 0& \ddots & \star\\
0 &0& 1
\end{matrix}
&
\begin{matrix}
\star & \dots & \star \\
\vdots && \vdots \\
\star & \dots & \star
\end{matrix}\\
\begin{matrix}
0&\dots & 0 \\
\vdots &\ddots &\vdots \\
0 & \dots & 0
\end{matrix}
&D
\end{pmatrix}
\end{equation}
where $U$ is unipotent upper-triangular in $\SL(n-p,\R)$ for a fixed $p< n$, $L$ is any matrix in $\GL((n-p)\times p,\R)$ and $D\in H$ is a matrix belonging to a subgroup $H\subset \SL(p,\R)$ with  $0$ or $1$ discompacity. 
We say that $\R^n\rtimes G$ is \emph{unipotent co-discompacity one fibered}.
\end{definition}

\paragraph{Note}
Affine structures with such holonomies carry a natural affine fiber bundle. Indeed, $\R^{n-p}\times \{0\}_{\R^p}$ is stabilized by $G$.

\paragraph{Example}
Our favorite example is $G=\R^2\rtimes\SL(2,\R)$:
\begin{align}
\begin{pmatrix}
1 & \begin{matrix}\R & \R\end{matrix} \\ 
\begin{matrix}0\\0\end{matrix} & \SL(2,\R)
\end{pmatrix}
\end{align}
The geometry will correspond to a circle fiber bundle over an affine surface.

\paragraph{Total discompacity}
When $D$ belongs to a subgroup with discompacity one   and the upper-part $L$ is non trivial, $G$ might have its discompacity larger than $1$. Hence it is not covered by Carrière's result~\cite{Carriere}.
For instance, in $G = \R^2\rtimes \SL(2,\R)$, 
\begin{equation}
M(\lambda) = \begin{pmatrix}1 & 0& \frac 1{\lambda^2}\\ 0 &\lambda &0\\0&0&\frac 1\lambda\end{pmatrix} 
\end{equation}
has discompacity two when $\lambda\to 0$.

\begin{theorem}\label{thm-unip-disc1}
Let $M$ be a connected closed manifold with a $(\R^n\rtimes G,\R^n)$-structure with $G$ unipotent co-discompacity one fibered, then $M$ is complete.
\end{theorem}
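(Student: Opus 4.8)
The plan is to prove completeness by exhibiting $\widetilde M$ as a convex subset of $\R^n$ on which $D$ is a diffeomorphism, exploiting the fibered structure. Write coordinates as $\R^n = \R^{n-p} \times \R^p$ with $\R^{n-p} \times \{0\}$ the $G$-invariant subspace carrying the unipotent $\times$ linear part $\begin{pmatrix} U & L \\ 0 & D \end{pmatrix}$.

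First I would handle the fiber direction. The key observation is that the holonomy acts on $\R^{n-p}$ (the fiber) through the unipotent upper-triangular group $U$ together with a translation part; since unipotent groups have vanishing discompacity and, by the Fried–Goldman–Hirsch theorem on nilpotent holonomy (or a direct argument using Lemma~\ref{lem-sumconv}), the first coordinate axis $\R \times \{0\}^{n-1}$ is completely integrable from every point of $\widetilde M$. I would then argue inductively up the flag $\R \subset \R^2 \subset \dots \subset \R^{n-p}$, showing at each stage that the next coordinate line is integrable from every point, using the visibility lemmas: the crucial point is that the holonomy acts on each successive quotient $\R^{k}/\R^{k-1}$ by translations only (because $U$ is unipotent), so the obstruction to visibility is controlled and one can push geodesics across using Lemma~\ref{lem-sumconv} repeatedly. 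The conclusion of this first phase is that the whole fiber $F_0 = \R^{n-p} \times \{0\}$ is completely integrable from $\widetilde M$, so that $\widetilde M$ contains, through every point, a convex slice developing onto a full affine copy of $\R^{n-p}$ (a translate of $F_0$).

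Second, I would descend to the base. The previous step produces a fibration $\widetilde M \to N$ whose fibers are the complete affine $(n-p)$-dimensional leaves just constructed, and $N$ carries an induced $(\R^p \rtimes H, \R^p)$-structure with $H$ of discompacity $0$ or $1$. If the discompacity of $H$ is $0$ the base is complete by an easy relatively-compact-holonomy argument (or, again, Fried–Goldman–Hirsch); if it is $1$, completeness of $N$ follows from Carrière's theorem~\cite{Carriere}. Either way $N$ is a complete affine manifold, so its developing image is all of $\R^p$ and is convex. Combining the complete base with the complete fibers — the fibers being integrable linear subspaces in the sense of Lemma~\ref{lem-sumconv}, with $F = F_0$ and $C$ a lift of a convex exhausting $N$ — I would conclude that $D(\widetilde M) = \R^{n-p} + D(C) = \R^n$ and that $D$ is injective, hence a diffeomorphism.

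The main obstacle I anticipate is the inductive visibility argument in the first phase: showing that integrability of $\R^{k-1}$ genuinely propagates to $\R^{k}$ across all of $\widetilde M$. The subtlety is that even though the holonomy acts by translations on the quotient $\R^k/\R^{k-1}$, a priori the developing image of a would-be geodesic in the new direction could exit the region already known to be in $D(\widetilde M)$; one must use closedness of $M$ (compactness, giving uniform size to trivializing charts and a lower bound on how far geodesics extend) together with the convexity produced at the previous stage to rule this out. This is exactly the place where the hypothesis ``$L$ arbitrary but $U$ unipotent'' is used: the unipotence keeps the quotient action a pure translation so that the convex slices glue coherently, while an arbitrary semisimple block there would reintroduce the full discompacity-$2$ difficulty that the theorem is designed to avoid. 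Once this propagation lemma is in place, the descent to the base and the final gluing are comparatively routine applications of the convexity toolbox of Section~1 and the cited completeness theorems.
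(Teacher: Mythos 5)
Your first phase is essentially the paper's first step (inductive integrability of $\R e_1\subset\dots\subset\R^{n-p}$ using unipotence), but the mechanism you sketch for the crucial propagation step is not the right one, and you yourself flag it as the main obstacle without closing it. Compactness of $M$ does \emph{not} give ``uniform size of trivializing charts and a lower bound on how far geodesics extend'' in the affine parameter: the affine length to the boundary of the developing image is not holonomy-invariant, so no such uniform lower bound exists in general (this is precisely why incomplete closed affine manifolds such as Hopf manifolds are possible). The actual argument needs recurrence: project the putatively incomplete geodesic in direction $e_{k+1}$ to $M$, use compactness to extract return times $t_j\to T$ into a fixed chart, and examine the return holonomies $\rho(g_{ji}^{-1})$. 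After saturating the chart by the already-integrable $\R^k$ (Lemma~\ref{lem-sumconv}), the only way the limit point can stay invisible is that $f_{ji}^{-1}(e_{k+1})$ escapes modulo $\R^k$; unipotence of the block acting on $\R^{n-p}$ gives $f_{ji}^{-1}(e_{k+1})\in e_{k+1}+\R^k$, a contradiction. Without this recurrence-plus-holonomy control your induction does not go through. (Also, you cannot simply cite Fried--Goldman--Hirsch here: their theorem assumes the \emph{whole} holonomy is nilpotent, which is false in the present setting; only a relative version of their argument, as above, applies.)

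The second phase is a genuinely different route from the paper's, and it has a gap that I do not see how to fill: you pass to a ``base'' $N$ with an induced $(\R^p\rtimes H,\R^p)$-structure and invoke Carrière's theorem, but Carrière's theorem is about \emph{closed} affine manifolds, and nothing guarantees that the leaf space of the fiber foliation is a manifold at all, let alone Hausdorff and compact: the foliation of $M$ by the $\R^{n-p}$-direction can have dense or non-closed leaves (already for linear foliations of tori), two distinct leaves in $\widetilde M$ may develop onto the same affine subspace, and the $\pi_1(M)$-action on the leaf space need not be proper. The paper avoids forming a base altogether: it runs Carrière's incomplete-triangle argument directly in $\R^n$, saturating the recurrence charts by the integrable subspace $\R^{n-p}$ so that the degenerating convexes $D(U_j)+\R^{n-p}$ are governed only by the discompacity-one quotient action of $H$, hence collapse onto a hyperplane, which is incompatible with the visibility of the interior of the triangle; the remaining possibilities for $D(\widetilde M)$ (half-space or band) are then excluded by compactness and by Goldman--Hirsch irreducibility. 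Your final gluing step (``complete fibers plus complete base implies $D$ is a diffeomorphism onto $\R^n$'') would also need an argument, but it is moot until the base is legitimately produced; as it stands the proposal does not yield the theorem.
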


This theorem furnishes new cases to the Markus conjecture since $G$ is not covered by Carrière's nor by Fried-Goldman-Hirsch's results.

The proof is as follows. We first show completeness along $\R^{n-p}\times\{0\}$, the fiber part. Then we show that in $\R^n$, the discompacity one hypothesis allows us to show that the developing map is either a diffeomorphism or a cover onto the complement of a hyperplane. By irreducibility following Goldman-Hirsch, we will deduce  the completeness of the manifold.

\begin{lemma}
The linear subspace $\R^{n-p}\times\{0\}$ is completely integrable from $\widetilde M$.
\end{lemma}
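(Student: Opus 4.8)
The plan is to extract the natural fiber foliation from the block structure of $G$, reduce the statement to a leafwise completeness assertion, and then exploit that the leafwise holonomy is \emph{unipotent}. Since $G$ stabilises $\R^{n-p}\times\{0\}$ and in fact permutes the affine subspaces of $\R^n$ parallel to it --- acting on the set of these ``fibers'' through the block $D$ and on each fiber with linear part the unipotent block $U$ --- the foliation of $\R^n$ by these subspaces pulls back under the developing map to a $\pi_1(M)$-invariant foliation $\widetilde{\mathcal F}$ of $\widetilde M$, hence descends to a foliation $\mathcal F$ of $M$ with flat affine $(n-p)$-dimensional leaves. A geodesic of $\widetilde M$ whose development lies in a translate of $\R^{n-p}\times\{0\}$ is contained in a single leaf of $\widetilde{\mathcal F}$, and conversely the developing map sends each leaf into one such translate. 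So I would first argue that ``$\R^{n-p}\times\{0\}$ is completely integrable from $\widetilde M$'' follows from the geodesic completeness of the induced affine structure on each leaf $\widetilde L$ of $\widetilde{\mathcal F}$, i.e. from $D|_{\widetilde L}$ being a covering map onto the fiber $D(\widetilde L)+(\R^{n-p}\times\{0\})$; the fiber being simply connected, this makes $D|_{\widetilde L}$ a diffeomorphism and every point of the fiber visible from every point of $\widetilde L$.

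Next I would use compactness. Covering $M$ by finitely many convex charts and invoking a Lebesgue number, one gets a uniform scale on which every geodesic of $\widetilde M$ extends; in particular a fiber-direction geodesic issued from $x$ and developing along a segment $[D(x),z]$ of the fiber through $D(x)$ either lifts entirely, in which case $z$ is visible from $x$, or else admits a maximal lift $\tilde\sigma$ defined on a half-open interval whose image eventually leaves every compact subset of $\widetilde M$. The whole point is to rule out the second alternative.

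I expect that to be the main obstacle. Projecting $\tilde\sigma$ to the compact manifold $M$ and lifting back produces a sequence $g_k\in\pi_1(M)$ escaping to infinity and times $s_k$ approaching the right end such that $g_k\tilde\sigma(s_k)$ converges in $\widetilde M$; developing and applying $\rho(g_k)$ turns the developed segments, which remain bounded near their moving endpoint, into segments whose length is the image of a fixed fiber vector under the unipotent blocks $U(g_k)$. Since a unipotent linear group cannot bring a nonzero vector arbitrarily close to the origin, the remaining endpoints $\rho(g_k)D(x)$ must stay in a bounded region; combining this with the proper discontinuity of the $\pi_1(M)$-action and with the control on the block $D$ of discompacity at most one (as in Carrière's singular-value analysis \cite{Carriere}), I would extract a limiting configuration and obtain a proper affine subspace of $\R^n$ --- a ``fiber at infinity'' --- invariant under a finite-index subgroup of the holonomy, contradicting its irreducibility (Goldman--Hirsch \cite{GH}). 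The delicate points, to be carried out carefully, are precisely this extraction from a diverging sequence of affine transformations and the verification that unipotency of the $U$-block excludes the degeneration; with the bad alternative ruled out, each leaf of $\widetilde{\mathcal F}$ is geodesically complete and $\R^{n-p}\times\{0\}$ is completely integrable from $\widetilde M$.
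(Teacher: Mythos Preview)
Your plan has a genuine gap: it misses the inductive (filtered) structure of the unipotent block that the paper exploits, and the contradiction you sketch does not go through as written. The paper proceeds direction by direction. For the base case, $e_1$ is literally fixed by $G$, so the constant vector field $e_1$ on $\R^n$ is $\R^n\rtimes G$-invariant, pulls back to a $\pi_1(M)$-invariant vector field on $\widetilde M$, descends to the compact $M$, and is therefore complete; this is how compactness enters, and it requires an \emph{invariant direction}, not just an invariant subspace. For the inductive step one assumes $\R^k=\langle e_1,\dots,e_k\rangle$ is already integrable and studies an incomplete $e_{k+1}$-geodesic via Fried dynamics; the crucial point is that the charts $U_i$ can be thickened to convexes developed onto $D(U_i)+\R^k$, and since the unipotent block satisfies $f^{-1}(e_{k+1})\in e_{k+1}+\R^k$, the ``limit point'' $\rho(g_{ji}^{-1})(D(x)+e_{k+1})$ is forced into $D(U_i)+\R^k$, contradicting incompleteness. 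Without the inductive thickening you have no absorbing set for the drift that $U$ produces in the lower directions.

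Concretely, your claim that ``since a unipotent linear group cannot bring a nonzero vector arbitrarily close to the origin, the remaining endpoints $\rho(g_k)D(x)$ must stay in a bounded region'' is backwards: unipotent maps can \emph{expand} the displacement $s_k v$ without bound, so $\rho(g_k)D(x)$ may well escape to infinity even though $\rho(g_k)D(\tilde\sigma(s_k))$ converges. Your fallback --- extracting a holonomy-invariant ``fiber at infinity'' via discompacity of $D$ and Goldman--Hirsch irreducibility --- imports tools that the paper uses only in the \emph{next} step (the transverse Carri\`ere argument for the full theorem), and it is not clear how they would produce an invariant proper affine subspace from a diverging sequence inside a single fiber. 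Finally, note that the leaves of $\mathcal F$ in $M$ need not be compact, so one cannot argue their affine completeness directly from compactness of $M$; this is precisely why the paper works one invariant direction at a time.
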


This lemma is about the completeness of unipotent closed manifolds.  Fried-Goldman-Hirsch~\cite[theorem 6.8]{FGH} propose a proof when $p=0$. We state our proof differently in order to obtain the case $p>0$.

\begin{proof}
Start with the direction $\R e_1\subset \R^{n-p}$. Since $e_1$ is fixed by $G$, the vector field in $\R^n$ defined by $V(z) = e_1|_z$ is invariant under $\R^n\rtimes G$. Hence it is covered in $\widetilde M$ by a $\pi_1(M)$-invariant vector field $W$ that projects into $M$. As $M$ is closed, it implies that $W$ is complete and therefore $D(x) + \R e_1$ is visible from $x$ for any $x\in \widetilde M$.

Now assume that $\R^k=\langle e_1,\dots,e_k\rangle \subset \R^{n-p}$ is integrable from any point of $\widetilde M$. By sum of convex sets (see lemma~\ref{lem-sumconv}), to show that $\R^{k+1}$ is integrable, it suffices to consider the direction $\R e_{k+1}$ and show it is integrable. 
Assume that $\gamma$ is geodesic and incomplete at $t=T$ with $\dd D_x\gamma'(0) = e_{k+1}$. That is to say,
\begin{equation}
D(\gamma)(t) = D(x) + t e_{k+1}
\end{equation}
and $\gamma$ has no prolongation by continuity at $t=T$. Then $\gamma$ projects in $M$ to an infinite curve with no continuation at $t=T$.  For simplicity, say $T=1$.

We consider  the associated Fried dynamics~\cite{Fried}: $\pi(\gamma)$
 has an accumulation point by compacity of $M$, say $y\in M$. Let $U$ be a small neighborhood of $y$. Let $t_k \to 1$ such that $\pi(\gamma)(t_k)\in U$,  $\pi(\gamma)(t_k) \to y$ and $\pi(\gamma)$ exits $U$ between the times $t_k$ and $t_{k+1}$.
 Lift $(U,y)$ to $(U_m,y_m)$ such that $\gamma(t_m)\in U_m$.
Consider the transformations $g_{ji}\in\pi_1(M,y)$ such that $g_{ji}$ sends $(U_i,y_i)$ to $(U_j,y_j)$. Since $\pi(\gamma)(t_k) \to y$, we have
\begin{equation}
g_{ji}^{-1}(\gamma(t_j))\to y_i
\end{equation}
and since $\pi(\gamma)$ exits $U$ between two times,
\begin{equation}
g_{ji}^{-1}(\gamma(t_{j+n}))\not\in U_i,
\end{equation}
for any $n>0$.

Denote by $\rho\colon\pi_1(M,y)\to \R^n\rtimes G$ the holonomy morphism.
In the developing map, we can assume, up to schrink $U$, that each $D(U_m)$ is a convex set of $\R^n$.  By convexity and since $D(\gamma)$ is a segment, it implies that
\begin{align}
\rho(g_{ji}^{-1})(D(x) + t_j e_{k+1}) &\to D(y_i)\\
\rho(g_{ji}^{-1})(D(x) + t_{j+n} e_{k+1}) &\not\in D(U_i). 
\end{align}
But, when $n\to \infty$, $D(\gamma)(t_{j+n})$ does converge in $\R^n$ to $D(x)+e_{k+1}$. Hence:
\begin{equation}
\rho(g_{ji}^{-1})(D(x) + e_{k+1}) \not\in D(U_i).
\end{equation}
Indeed, otherwise, the segment $D(\gamma)(t)$ from $t=t_i$ to $t=1$ is completely included in $D(U_i)$ for a large $j>i$ and hence is complete at $t=1$, which is impossible by assumption. 

Now, since $\R^{k}\subset \R^{n-p}$ is assumed to be always integrable, $D(y_i)+ \R^k$ is visible from $y_i$ and more generally for any convex  $C\subset \widetilde M$, $D(C)+\R^k$ is completely visible from $C$.

The sequence $\rho(g_{ji}^{-1})(D(x) + e_{k+1})$ cannot intersect $\rho(g_{ji}^{-1})(D(U_j)) = D(U_i)$ for any $j>i$ large enough. By considering the convex  sets $D(U_j)+\R^k$ visible from $\gamma(t_j)\in U_j$, the point $\rho(g_{ji}^{-1})(D(x) + e_{k+1})$ cannot belong to $\rho(g_{ji})^{-1}(D(U_j)+\R^k)=D(U_i)+\R^k$ either when $j>i$ is large enough.

By writing $\rho(g_{ji})(x) = c_{ji}+f_{ji}(x)$ with $c_{ji}\in \R^n$ and $f_{ji}\in G$ we have:
\begin{align}
\rho(g_{ji}^{-1})(x) &= -f_{ji}^{-1}(c_{ji}) + f_{ji}^{-1}(x) \\
\rho(g_{ji}^{-1}(D(x) + t_j e_{k+1}) &= 
-f_{ji}^{-1}(c_{ji}) + f_{ji}^{-1}(D(x)) + t_j f_{ji}^{-1}(e_{k+1}) \to D(y_i)\\
\rho(g_{ji}^{-1})(D(x) +  e_{k+1}) &= 
-f_{ji}^{-1}(c_{ji}) + f_{ji}^{-1}(D(x)) +  f_{ji}^{-1}(e_{k+1}) \not\in D(U_i)+\R^k.
\end{align}
Therefore, in order for the last sequence to not belong to $D(U_i) + \R^k$, we must have $f_{ji}^{-1}(e_{k+1})$ diverging along a direction  outside $\R^k\subset \R^{n-p}$. But $f_{ji}$ is unipotent, so $f_{ji}^{-1}(e_{k+1}) \in e_{k+1}+\R^k$, which gives a contradiction. Hence $\R e_{k+1}$ is always integrable and we conclude the proof.
\end{proof}

\begin{proof}[Proof of the theorem]
To finish the proof, we follow Carrière's argument~\cite{Carriere}. 

 Let $\Delta$ be a triangle incomplete at his boundary: for $x\in\widetilde M$ we consider $\Delta\subset \R^n$ a maximal triangle where $D(x)$ is a vertex, the  other vertices being $D(y)$ and $D(z)$ that are visible from $x$ but the edge between those last two vertices is not visible from $x$. We can assume that the interior of $\Delta$ is fully visible from $x$.

Parametrize the edge between $D(y)$ and $D(z)$ by $w(t)\in [D(y),D(z)]$ with $t\in [0,1]$. By openness of the visibility we can assume that $w(T)\in [D(y),D(z)]$ is the first invisible point from $x$ in the sense that for $t<T$, $w(t)$ is visible.

The geodesic between $D(x)$ and $w(T)$ is incomplete, so the corresponding geodesic in $\widetilde M$ explodes in finite time. For $v$ such that $D(x)+ t\dd D_x v = w(t)$, we have that $\gamma(t)=x+tv$ is incomplete at $t=1$.
Recall that $\R^{n-p}$ is completely integrable on $\widetilde M$.
Up to consider the antecedent to $\Delta + \R^{n-p}$ and vary $x$, we can assume that $\Delta$ lies in the subspace $\{0\}\times \R^p\subset \R^n$, defined by $e_1=\dots=e_{n-p}=0$, based at $D(x)$. Hence $\dd D_x v$ has vanishing coordinates in $\R^{n-p}\times\{0\}$.

We consider again the associated Fried dynamics $(\{g_{ji}\},\{U_i\})$. Each $U_i$ can again be assumed to be convex. We can prolongate each $U_i$ to the convex developed into $D(U_i)+\R^{n-p}$ by integrability of $\R^{n-p}$. (See lemma~\ref{lem-sumconv}.)

In $\Delta+\R^{n-p}$, the convex sets $D(U_i)+\R^{n-p}$ intersect $D(\gamma)$ at the times $t_j\to 1$ and $D(\gamma)$ exits $D(U_i)+\R^{n-p}$ before entering $D(U_{i+1})+\R^{n-p}$. Note that the holonomy transformations $\rho(g_{ji})$ transforming $D(U_i)$ into $D(U_j)$ transform also $D(U_i) + \R^{n-p}$ to $D(U_j)+\R^{n-p}$ since the linear group $G$ preserves $\R^{n-p}$.

Therefore $D(U_j) + \R^{n-p} = \rho(g_{ji})(D(U_i)+ \R^{n-p})$ degenerate to a hyperplane $H$ of $\R^n$. Indeed, $G$ has discompacity one in the quotient by $\R^{n-p}$ by hypothesis. Therefore the limit $H$ must be tangent or transverse to $\Delta+\R^{n-p}$.
Since $G$ preserves $\R^{n-p}$, $H$ is tangent or transverse to $\Delta\subset \R^p$.
 Both case are impossible by visibility of the interior of $\Delta$ and visibility of $w(s)$ for $s<T$. 
Indeed, otherwise, by taking a limit point in $\overline \Delta$, there would be a complete geodesic developed into $\Delta$ and intersecting infinitely many $U_i$, impossible for a compact curve.

It follows that there are no triangle with incomplete boundary in $\widetilde M$. Hence $\widetilde M$ is convex and is developed in $\R^n$ inside a convex set. By the same construction, this convex $D(\widetilde M)$ is either a half-space bounded by a limit hyperplane $H$ or is a band between two of them which must be parallel. The second case cannot happen since otherwise the distance function to the boundary would give a function on $M$ that has no minimal value, contradicting its compactness. Therefore $D(\widetilde M)$ is a half-space and $H$ must be preserved by the holonomy. It is a contradiction by the fact that the holonomy cannot preserve any proper affine subspace by Goldman-Hirsch~\cite{GH}. 
\end{proof}

\section{Three-manifolds with parallel volume and completeness}

In this section we show:

\begin{theorem}\label{thm-3fold}
Let $M$ be a connected closed $(\R^3,\R^3\rtimes \SL(3,\R))$-manifold. If the holonomy of $M$ centralizes a non trivial transformation $\phi\in \R^3\rtimes \SL(3,\R)$, then $M$ is complete.
\end{theorem}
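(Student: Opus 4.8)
The plan is to carry out the factorisation announced in the introduction, in two stages: first show that, possibly after replacing $M$ by a finite cover and the developing map by an affine conjugate, the holonomy group $\Gamma$ is contained in $\R^3\rtimes(\R^2\rtimes\SL(2,\R))$, realised as the stabiliser of the vector $e_1$ inside $\R^3\rtimes\SL(3,\R)$; and then invoke Theorem~\ref{thm-unip-disc1}, observing that this subgroup is unipotent co-discompacity one fibered with $n=3$, $p=2$, trivial block $U$, $L\in\R^2$, and $H=\SL(2,\R)$, the latter having discompacity one.

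To start, the two normalisation lemmas let us assume $\phi=b+g(x)$ with $b\in\cE_1(g)$, and Proposition~\ref{prop-transl} gives $b\neq 0$; in particular $1$ is an eigenvalue of $g$. Moreover $\phi$ has no fixed point, since a fixed point would allow, after conjugating by a translation, to reduce to $b=0$, which Proposition~\ref{prop-transl} excludes. For any $\gamma=c+f(x)\in\Gamma$, the commutation lemma yields $fg=gf$ together with $(\id-g)(c)=(\id-f)(b)$. Because $f$ commutes with $g$ it preserves $\cE_1(g)$, so the right-hand side of this relation lies in $\cE_1(g)$, while the left-hand side lies in the image $(\id-g)(\R^3)$.

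The first and main case is when $1$ is a \emph{semisimple} eigenvalue of $g$, equivalently $\cE_1(g)\cap(\id-g)(\R^3)=\{0\}$. Then the relation above forces $(\id-f)(b)=0$, that is $f(b)=b$, for every $\gamma\in\Gamma$; conjugating the developing map so that $b=e_1$, the holonomy lies in the announced subgroup and Theorem~\ref{thm-unip-disc1} concludes. In the remaining case $1$ is not semisimple; since $b\neq0$ and $\det g=1$, the generalised $1$-eigenspace is all of $\R^3$, so $g$ is unipotent, and it cannot be a single Jordan block, for then $\cE_1(g)\subset(\id-g)(\R^3)$ and $\phi$ would have a fixed point. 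Hence $g$ has Jordan type $(2,1)$. Here the no-fixed-point condition forces $b$ to have a nonzero component along the size-one block, so one may choose a basis $e_1,e_2,e_3$ with $g(e_1)=e_1$, $g(e_2)=e_2+e_1$, $g(e_3)=e_3$, and $b=e_3$. A direct computation of the centraliser of $g$ in $\SL(3,\R)$ shows its matrices have the form with equal $(1,1)$ and $(2,2)$ entries $a$, vanishing entries below the diagonal except possibly in position $(3,2)$, and a free $(3,3)$ entry; reading the relation $(\id-g)(c)=(\id-f)(b)$ along $e_3$ forces that $(3,3)$ entry to equal $1$ for every $\gamma$, after which $\det f=a^2=1$ leaves $a\in\{\pm1\}$. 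The sign $\gamma\mapsto a$ is a homomorphism $\Gamma\to\{\pm1\}$, and passing to the corresponding cover of $M$, of index at most two, produces linear parts fixing $e_1$; since that cover has the same developing map, Theorem~\ref{thm-unip-disc1} applied to it yields completeness of $M$.

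The substance of the argument is thus the linear-algebra case analysis above combined with the already-proved Theorem~\ref{thm-unip-disc1}. I expect the genuinely delicate point to be the non-semisimple case: there the common fixed vector appears only after simultaneously using the no-fixed-point property of $\phi$, the translational component of the commutation relation, and a passage to a double cover, and one must also check carefully that the resulting holonomy group is honestly unipotent co-discompacity one fibered (it is, with $\SL(2,\R)$ contributing exactly the allowed discompacity one), so that the completeness statement of the previous section applies verbatim.
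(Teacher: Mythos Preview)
Your proof is correct and follows the same overall strategy as the paper: reduce the linear holonomy, up to affine conjugation, into $\R^2\rtimes\SL(2,\R)$ by a case analysis on the linear part $g$ of $\phi$, then invoke Theorem~\ref{thm-unip-disc1}.

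The organisation differs. You split along the dichotomy ``$1$ is a semisimple eigenvalue of $g$'' versus ``$g$ is unipotent $\neq\id$,'' and in the unipotent branch you use the absence of fixed points for $\phi$ (a consequence of Goldman--Hirsch irreducibility) to exclude the single size-$3$ Jordan block and to pin down the direction of $b$ in the $(2,1)$ case. The paper instead lists three explicit normal forms $g_1,g_2,g_3$ for unipotent $g$ (distinguished by the position of $b=e_1$ relative to the Jordan structure) and computes each centraliser directly; the $g_2$ case (single block) is handled by computation rather than ruled out, and the $g_3$ case is eliminated via an invariant hyperplane. Your systematic use of the no-fixed-point property is a clean shortcut for both of these.

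One point where your argument is in fact more careful: in the Jordan type $(2,1)$ case the translational relation only yields $a^2=1$, and you pass to an index-$\leq 2$ cover to force $a=1$ before applying Theorem~\ref{thm-unip-disc1}. The paper's parallel step (its case $g_1$) obtains $1/\beta^2=1$ and writes ``hence $\beta=1$'' without addressing $\beta=-1$; your double cover is the honest way to absorb this sign, and since the cover shares the developing map with $M$, completeness transfers back as you say.
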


As stated in introduction, we deduce a corollary from interpreting the hypothesis in two ways.

\begin{corollary} \label{cor-1}
Let $M$  be a  connected closed $(\R^3,\R^3\rtimes \SL(3,\R))$-manifold. 
\begin{itemize}
\item If the holonomy of $M$ has non trivial center,   then $M$ is complete.
\item If the automorphism group of $M$ is non discrete, then $M$ is complete.
\end{itemize}
\end{corollary}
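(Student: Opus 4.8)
The plan is to reduce Corollary~\ref{cor-1} to Theorem~\ref{thm-3fold} by exhibiting, in each of the two cases, a non-trivial affine transformation $\phi\in\R^3\rtimes\SL(3,\R)$ that centralizes the holonomy group $\Gamma=\rho(\pi_1(M))$. For the first bullet, suppose the center $Z(\Gamma)$ is non-trivial and pick $1\neq\phi\in Z(\Gamma)$. Then $\phi$ commutes with every element of $\Gamma$ by definition, so $\Gamma$ centralizes $\phi$; since $\phi\in\Gamma\subset\R^3\rtimes\SL(3,\R)$ has parallel volume, the hypothesis of Theorem~\ref{thm-3fold} is verified verbatim and $M$ is complete. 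The only subtlety is that a priori $\phi$ could fail to be ``non-trivial'' in the sense required, but $\phi\neq\id$ is exactly the assumption, so there is nothing to check.

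For the second bullet I would recall that an \emph{automorphism} of the affine structure on $M$ is a diffeomorphism of $M$ preserving $(D,\rho)$ up to the usual equivalence; equivalently it lifts to an affine transformation $\widetilde\phi$ of $\R^3$ commuting with the developing map, and such $\widetilde\phi$ normalizes $\rho(\pi_1(M))$. The automorphism group $\Aut(M)$ is a Lie group acting on $M$; the hypothesis is that it is not discrete, hence its identity component $\Aut(M)^0$ is a non-trivial connected Lie group. I would then argue that the connected group $\Aut(M)^0$ must in fact \emph{centralize} $\Gamma$ rather than merely normalize it: the conjugation action of $\Aut(M)^0$ on the (countable, hence with countably many components in any case discrete-enough sense) subgroup $\Gamma$ is by a continuous homomorphism $\Aut(M)^0\to\Aut(\Gamma)$ whose target, for a discrete group $\Gamma$, carries no nontrivial continuous image of a connected group into the outer/conjugation action on fixed elements --- more precisely, for each fixed $\gamma\in\Gamma$ the orbit map $\phi\mapsto\phi\gamma\phi^{-1}$ is continuous from the connected $\Aut(M)^0$ into the discrete set $\Gamma$, hence constant, equal to $\gamma$. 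Thus every $\phi\in\Aut(M)^0$ commutes with every $\gamma\in\Gamma$, and picking any $\phi\neq\id$ in $\Aut(M)^0$ (which exists since the group is non-discrete, so in particular non-trivial and hence of positive dimension) gives a non-trivial element of $\R^3\rtimes\SL(3,\R)$ centralized by the holonomy. Again Theorem~\ref{thm-3fold} applies.

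The step I expect to require the most care is making rigorous the passage from ``non-discrete automorphism group'' to ``existence of a non-trivial affine transformation lying in $\R^3\rtimes\SL(3,\R)$ and commuting with $\Gamma$''. Two points need attention: first, one must know that automorphisms of a closed flat affine manifold lift to honest affine transformations of $\R^n$ (this is standard, using that $D$ is a local diffeomorphism and the equivariance, and is implicit in the literature quoted, e.g.\ \cite{FGH}); second, one must ensure the centralizing transformation has \emph{unit} determinant linear part, i.e.\ lies in $\R^3\rtimes\SL(3,\R)$ and not merely in $\Aff(3,\R)$. For the latter I would note that the linear part of any element of $\Aut(M)^0$ preserves the parallel volume form that descends to $M$ from $\R^3$ (it preserves the affine structure, hence the parallel volume), so its determinant is $\pm1$, and being in the identity component it is $+1$; alternatively the determinant character $\Aut(M)^0\to\R^{>0}$ restricted to a compact-or-unipotent flow is trivial. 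Once these two points are secured, both bullets follow immediately from Theorem~\ref{thm-3fold}.

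\begin{proof}[Proof of Corollary~\ref{cor-1}]
In both cases we produce a non-trivial $\phi\in\R^3\rtimes\SL(3,\R)$ centralized by the holonomy $\Gamma=\rho(\pi_1(M))$, and conclude by Theorem~\ref{thm-3fold}.

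If $Z(\Gamma)\neq\{1\}$, any non-trivial $\phi\in Z(\Gamma)$ lies in $\R^3\rtimes\SL(3,\R)$ and commutes with all of $\Gamma$ by definition.

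Now suppose $\Aut(M)$ is non-discrete, so its identity component $\Aut(M)^0$ is a non-trivial connected Lie group. Each $\phi\in\Aut(M)^0$ lifts to an affine transformation $\widetilde\phi\in\Aff(3,\R)$ of $\R^3$ commuting with the developing map $D$, hence normalizing $\Gamma$; since $\widetilde\phi$ preserves the parallel volume form that $M$ inherits from $\R^3$ and lies in a connected group, its linear part has determinant $1$, so $\widetilde\phi\in\R^3\rtimes\SL(3,\R)$. For a fixed $\gamma\in\Gamma$, the map $\phi\mapsto\widetilde\phi\,\rho(\gamma)\,\widetilde\phi^{-1}$ is a continuous map from the connected group $\Aut(M)^0$ to the discrete group $\Gamma$, hence constant, equal to its value $\rho(\gamma)$ at the identity. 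Thus every $\widetilde\phi$ with $\phi\in\Aut(M)^0$ commutes with $\rho(\gamma)$ for all $\gamma$, i.e.\ centralizes $\Gamma$. Choosing $\phi\neq\id$, which exists since $\Aut(M)^0$ has positive dimension, gives the desired non-trivial centralizing transformation.
\end{proof}
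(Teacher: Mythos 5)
Your first bullet is exactly the intended argument and is fine: a non-trivial element of the center of the holonomy group is itself a non-trivial element of $\R^3\rtimes\SL(3,\R)$ centralized by the holonomy, and Theorem~\ref{thm-3fold} applies. (The determinant worry you raise is moot with the paper's definitions, since an automorphism is by definition a diffeomorphism whose lift satisfies $D\circ\widetilde f=\chi(\widetilde f)\circ D$ with $\chi(\widetilde f)\in\R^3\rtimes\SL(3,\R)$; your closed-manifold volume argument would also settle it.)

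The second bullet, however, has a genuine gap: your key step is that $\phi\mapsto\widetilde\phi\,\rho(\gamma)\,\widetilde\phi^{-1}$ is a continuous map from the connected group $\Aut(M)^0$ into ``the discrete group $\Gamma$'', hence constant. But discreteness of $\Gamma=\rho(\pi_1(M))$ inside $\R^3\rtimes\SL(3,\R)$ is not known at this stage --- it is essentially a consequence of the completeness one is trying to prove (for a complete structure the holonomy acts properly discontinuously on $\R^3$, hence is discrete), and closed affine manifolds can have non-discrete holonomy: for instance the affine structures on $T^2$ with developing map $\exp\colon\C\to\C^\ast$ and holonomy $\lambda\mapsto e^\lambda$ contain irrational rotations, so their holonomy group is dense in a circle. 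So ``continuous into a discrete group, hence constant'' cannot be invoked for $\Gamma$; as written the argument is circular. What \emph{is} discrete is the deck transformation group acting on $\widetilde M$, and this is how the paper argues: from non-discreteness one extracts non-trivial automorphisms $\widetilde f_n\to\id$ (no Lie-group structure on $\Aut(M)$ is needed, which you also assert without proof); these normalize $\pi_1(M)$, and writing $\widetilde f_n g=g'\widetilde f_n$ and evaluating at a basepoint $x$ in a small neighborhood $U$ whose translates $gU$ are disjoint, one gets $\widetilde f_n g(x)\in g'U\cap gU$ for $n$ large, forcing $g'=g$; by finite generation of $\pi_1(M)$ a single non-trivial $\widetilde f_n$ centralizes all of $\pi_1(M)$, and applying $D$ (a local diffeomorphism, so affine maps agreeing on a non-empty open set coincide) gives $\chi(\widetilde f_n)\rho(g)=\rho(g)\chi(\widetilde f_n)$ with $\chi(\widetilde f_n)\neq\id$. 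Your connectedness trick can be repaired in the same spirit by running it on the deck group (with a continuous choice of lifts along the identity component, e.g.\ by lifting isotopies, and using proper discontinuity to see that the conjugation map is locally constant), but the appeal to discreteness of $\Gamma$ must be removed.
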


\paragraph{Note}An automorphism of $M$ is a diffeomorphism $f$ of $M$ that can be lifted to a diffeomorphism $\widetilde f$ of $\widetilde M$ such that $D(\widetilde f(x)) = \chi(\widetilde f)D(x)$ for a fixed $\chi(\widetilde f)\in \R^3\rtimes \SL(3,\R)$. Let $\widetilde f_n\to\id$  be a sequence of automorphisms. Because $\widetilde f_n$ lifts diffeomorphisms of $M$, they normalize the fundamental group $\pi_1(M)$. But since $\pi_1(M)$ acts on a discrete fiber, the transformations $\widetilde f_n$ must centralize $\pi_1(M)$ once they are small enough. For instance, let $U\subset\widetilde M$ be contractible and $x\in U$. Then, for any $g\in\pi_1(M,x)$, $\widetilde f_n g(x)$ tends indeed to a point in $gU$, so $\widetilde f_n g = g' \widetilde f_n$ implies $g'=g$.

\paragraph{Seifert bundles}
The following corollary was shown for closed Seifert bundles over hyperbolic surfaces (with different methods) by Carrière, Dal'bo and Meignez~\cite{CDM}.  When it is put together with completeness of solvable manifolds by Goldman-Hirsch~\cite{GH2} one can obtain  the corollary.   We give a different proof that does not rely on this distinction.

\begin{corollary}\label{cor-2}
Let $M$ be a connected closed  Seifert  bundle over an orientable surface. Then $M$ verifies the Markus conjecture: if $M$ has a flat affine structure with parallel volume then $M$ is complete.
\end{corollary}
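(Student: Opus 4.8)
The plan is to reduce the statement to Theorem \ref{thm-3fold} by exploiting the Seifert structure. Let $M$ be a connected closed Seifert bundle over an orientable surface $\Sigma$, equipped with a flat affine structure with parallel volume. A Seifert bundle over an orientable surface is, up to a finite cover, an honest circle bundle over $\Sigma$; moreover the regular fiber defines a central element $z\in\pi_1(M)$ (after passing to a finite cover if there are exceptional fibers, the fiber class generates a normal, indeed central, infinite cyclic subgroup). First I would pass to this finite cover $M'\to M$, which is itself a closed flat affine manifold with parallel volume, and observe that proving $M'$ complete suffices: completeness of a flat affine manifold is inherited by finite covers in both directions, since $D$ is a diffeomorphism for $M'$ if and only if it is for $M$ (the developing maps and universal covers coincide).

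Next, on $M'$ the fiber class $z$ is a non-trivial central element of $\pi_1(M')$, so its holonomy $\rho(z)=\phi$ lies in the center of the holonomy group; in particular $\phi$ centralizes the holonomy. The one remaining point is that $\phi$ is \emph{non-trivial} as an affine transformation, i.e. $\rho(z)\neq\id$. This is exactly where the footnote in the introduction enters: by Goldman-Hirsch \cite{GH} and Smillie \cite{Smillie}, when the flat affine structure has parallel volume the holonomy of the fiber of a Seifert bundle never vanishes, so $\rho(z)\neq\id$. (Alternatively, one invokes directly the fact that $\pi_1(M')$ has non-trivial center and applies the first bullet of Corollary \ref{cor-1}, which is already proved from Theorem \ref{thm-3fold}; then no separate non-vanishing argument is needed, since Corollary \ref{cor-1} builds in the passage to handle a possibly trivial centralized element.) Either way, the hypothesis of Theorem \ref{thm-3fold} is satisfied for $M'$.

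Applying Theorem \ref{thm-3fold} to $M'$ gives that $M'$ is complete, hence $M$ is complete, which is precisely the Markus conjecture for $M$. The main obstacle in this argument is not the application of the theorem but the bookkeeping around exceptional fibers and orientations: one must be careful that the finite cover chosen to kill exceptional fibers (and to make the base orientable if needed, though here it is assumed orientable) still carries the induced flat affine structure with parallel volume and that the fiber class survives as a non-trivial central element with non-trivial holonomy. Since passing to finite covers preserves both "closed", "flat affine", and "parallel volume", and since centrality of the fiber class is a standard fact about Seifert fiberings, this reduction is routine; the substance of the corollary is entirely carried by Theorem \ref{thm-3fold}.
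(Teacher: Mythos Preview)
Your main line of argument is correct and is exactly the paper's approach: the regular fiber gives a central element of $\pi_1$, Goldman--Hirsch/Smillie ensure its holonomy is non-trivial, and Theorem~\ref{thm-3fold} then applies. The paper makes the non-vanishing step a bit more explicit than your citation of the footnote: it argues that if $\rho(z)=\id$ the holonomy factors through the base surface group and hence has cohomological dimension at most $2$, contradicting the lower bound $\geq 3$ from \cite{GH} and \cite{Smillie}. Your added care about passing to a finite cover to remove exceptional fibers is a reasonable elaboration the paper omits.

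One genuine slip: your parenthetical ``alternative'' via the first bullet of Corollary~\ref{cor-1} does \emph{not} avoid the non-vanishing argument. That bullet assumes the \emph{holonomy group} has non-trivial center, not $\pi_1(M')$. If $\rho(z)=\id$, the central element of $\pi_1(M')$ contributes nothing to the center of $\rho(\pi_1(M'))$, and there is no reason the remaining holonomy should have non-trivial center. So Corollary~\ref{cor-1} cannot be invoked without first knowing $\rho(z)\neq\id$, i.e.\ without the very step you claimed to bypass. Drop that parenthetical and your proof is clean.
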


\begin{proof}[Proof of the corollary]
Following Goldman-Hirsch~\cite{GH} and Smillie~\cite{Smillie}, a closed affine manifold with parallel volume    of dimension $n$ has a holonomy group that has cohomological dimension $\geq n$. Hence if $M$ is a Seifert bundle, the holonomy of the fiber cannot vanish since otherwise the holonomy would have cohomological dimension $2<3$. But the fiber corresponds to a central subgroup, hence we can apply the theorem.
\end{proof}

With theorem~\ref{thm-unip-disc1}, we only need to show the following proposition to prove theorem~\ref{thm-3fold}.

\begin{proposition}
Let $M$ be a connected closed  $(\R^3,\R^3\rtimes \SL(3,\R))$-manifold. Assume that the holonomy of $M$ centralizes a non trivial transformation $\phi\in \R^3\rtimes \SL(3,\R)$.
Up to a global affine conjugation, the linear holonomy of $M$ takes its values in $\R^2\rtimes \SL(2,\R)$:
\begin{equation}
\R^2\rtimes\SL(2,\R) = \begin{pmatrix}
1 & \begin{matrix}\R & \R\end{matrix} \\ 
\begin{matrix}0\\0\end{matrix} & \SL(2,\R)
\end{pmatrix}
\end{equation}
\end{proposition}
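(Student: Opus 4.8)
The plan is to analyze the centralized transformation $\phi = b + g(x)$ using the structural results from Section~1, and then to exhibit an explicit change of coordinates. By the lemma on normalizing the translation part, we may assume $b \in \cE_1(g)$, and by Proposition~\ref{prop-transl} (using Goldman--Hirsch irreducibility and closedness of $M$) we know $b \neq 0$. Thus $\cE_1(g) \neq \{0\}$ and $g$ has $1$ as an eigenvalue. After an affine conjugation by a translation and a linear change of basis, I would arrange that $b = e_1$ (so $\R e_1 \subset \cE_1(g)$). The first key step is to show that the holonomy $\Gamma$ fixes the vector $e_1$: indeed, writing any $\gamma \in \Gamma$ as $c + f(x)$ with $[f,g] = \id$, and using that $\gamma$ commutes with $\phi = e_1 + g(x)$, the commutation lemma gives $c - g(c) + f(e_1) - e_1 = 0$; I would extract from the interplay of this relation with the linear commutation $fg = gf$ and the fact that $\Gamma$ is a \emph{group} (so one can iterate and use that the translation parts stay controlled) that $f(e_1) = e_1$ for all $\gamma \in \Gamma$. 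Concretely, $f$ preserves $\cE_1(g)$ since $fg=gf$, so $f$ acts on $\cE_1(g)$; the relation $c - g(c) = e_1 - f(e_1)$ with $e_1 - f(e_1) \in \cE_1(g)$ forces, upon projecting to $\cE_1(g)$ along a $g$-invariant complement, that $e_1 - f(e_1)$ is in the image of $g - \id$ restricted to that complement, hence is $0$ since $e_1-f(e_1)\in\cE_1(g)$ is where $g-\id$ vanishes; one must still rule out the degenerate possibility that $f$ moves $e_1$ within a larger nontrivial Jordan structure, which is where the group-theoretic iteration (applying the relation to $\gamma^n$) is needed.

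Once $\Gamma$ fixes the vector $e_1$, the second step is to show $\Gamma$ also preserves the affine \emph{direction}, i.e.\ that the linear parts fix $e_1$ and, after a further conjugation, that the matrices have the block-triangular shape $\begin{pmatrix} 1 & \star & \star \\ 0 & & \\ 0 & & \end{pmatrix}$ with an $\SL(2,\R)$ block below. Fixing the linear direction $\R e_1$ is automatic from $f(e_1) = e_1$. To get the translation parts into the form where the $e_1$-coordinate of $c$ is free (the first row of the extended matrix being $(1, \star, \star)$ with the translational $\R$ in the top-left affine slot), I would use that $e_1 \in \cE_1(f)$ for every $f$, so $\Gamma$ acts on the quotient $\R^3 / \R e_1 \simeq \R^2$ affinely, and since $f$ is the identity on $\R e_1$ the induced action on the quotient is by $\SL(2,\R)$ linear parts composed with $\R^2$ translations — exactly the group $\R^2 \rtimes \SL(2,\R)$ realized inside $\R^3 \rtimes \SL(3,\R)$ as displayed. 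The determinant being $1$ on the ambient and the $e_1$-block being trivial forces the quotient linear parts into $\SL(2,\R)$. Choosing an affine splitting compatibly, the full holonomy lands in the stated subgroup.

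I expect the main obstacle to be the first step: upgrading the single commutation identity $c - g(c) + f(e_1) - e_1 = 0$ to the clean conclusion $f(e_1) = e_1$ for all of $\Gamma$ at once, rather than merely $f(e_1) \in e_1 + \operatorname{im}(g - \id)$. The subtlety is that $\cE_1(g)$ could be more than one-dimensional, or $g$ could have nontrivial unipotent part on $\cE_1(g)$'s complement, and a priori different $\gamma$'s could move $e_1$ in incompatible ways; the resolution is to note that $\phi$ commutes with the \emph{whole} holonomy group, so $\phi$ normalizes $\Gamma$ and one can study the $\Z$-action generated by $\phi$ together with $\Gamma$, invoking again Goldman--Hirsch irreducibility to forbid $\Gamma$ (or the larger group) from preserving the proper subspace spanned by the $\Gamma$-orbit of $e_1$ unless that orbit is just $\R e_1$ itself — which gives $f(e_1) \in \R e_1$, and then the determinant/triangularity pins it to $f(e_1) = e_1$. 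Handling the case distinctions on the Jordan type of $g$ (semisimple eigenvalue $1$ versus a nontrivial Jordan block) will require a short but careful argument, and this is where most of the work of the proof concentrates.
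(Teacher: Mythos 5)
Your first key step --- that every linear part $f$ of the holonomy fixes $e_1$, the translation direction of $\phi$ --- is false in general, and it fails exactly in the situation you flag as the ``degenerate possibility''. Take $g$ unipotent with $g(e_1)=e_1$, $g(e_2)=e_2$, $g(e_3)=e_2+e_3$ (the case $g_1$ of the paper). Then $\cE_1(g)=\langle e_1,e_2\rangle$ while $\operatorname{im}(g-\id)=\R e_2\subset\cE_1(g)$, so your projection argument (``$e_1-f(e_1)$ lies in the image of $g-\id$ on a complement, hence vanishes'') breaks down: the linear commutation $[f,g]=\id$ together with the affine relation $c-g(c)+f(e_1)-e_1=0$ only forces
\begin{equation}
f=\begin{pmatrix}\tfrac1{\beta^2} & 0 & w\\ x & \beta & v\\ 0&0&\beta\end{pmatrix},\qquad \tfrac1{\beta^2}=1,\quad x=c_3,
\end{equation}
so $f(e_1)=e_1+x\,e_2$ with $x$ equal to the third coordinate of the translation part of $\gamma$, and this is genuinely nonzero for elements of the centralizer. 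No iteration over $\gamma^n$ can restore $f(e_1)=e_1$, because the statement is simply false; the correct conclusion here is that the linear holonomy fixes $e_2$ (not $e_1$), and one lands in $\R^2\rtimes\SL(2,\R)$ only after swapping $e_1$ and $e_2$. Your appeal to Goldman--Hirsch to force the orbit of $e_1$ to be $\R e_1$ is also misdirected: irreducibility forbids invariant \emph{affine} subspaces of the affine holonomy action, not invariant linear subspaces of the linear holonomy --- indeed the conclusion of the proposition itself makes the linear holonomy preserve the line $\R e_1$.

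The paper's proof proceeds instead by an explicit case analysis on the Jordan type of $g$ (in $\SL(3,\R)$, $1$ is either a simple eigenvalue of $g$, or $g$ is unipotent, with three unipotent subcases): it computes the centralizer of $g$ in each case, feeds in the affine relation $c-g(c)+f(e_1)-e_1=0$, and either obtains the block form after an explicit change of basis (as above), or, for the Jordan type $g(e_3)=e_1+e_3$, shows the relation forces $c_3=\alpha-1$, so the whole affine holonomy preserves the affine hyperplane $\{z=-1\}$, which Goldman--Hirsch irreducibility excludes. That last step --- the only place where irreducibility legitimately enters, applied to an invariant affine hyperplane produced by the translation parts --- is absent from your sketch. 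As it stands your central claim is incorrect for one of the possible Jordan types and the argument would have to be reorganized along these case-analysis lines.
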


\begin{proof}
Up to a global affine conjugation, by proposition~\ref{prop-transl}, one can assume that $\phi(x) = e_1 + g(x)$ with $e_1 \in \cE_1(g)\neq \{0\}$. 

Let $\gamma(x)=c+f(x)$ be a holonomy transformation. By commutativity, $f(\cE_1(g)) = \cE_1(g)$ and recall we have the equations $[f,g]=e$ and $c-g(c)+f(e_1)-e_1=0$. 

Note that in dimension $3$ with determinant $1$, either $1$ is  a simple eigenvalue or  it is the only (complex) eigenvalue of $g$.
\begin{enumerate}
\item If $g = \id$ then the  equation $c-g(c)+f(e_1)-e_1=0$ reduces to $f(e_1)=e_1$, hence $f\in \R^2\rtimes \SL(2,\R)$.
\item If $1$ is a simple eigenvalue of $g$, then we can complete $e_1$ by a basis $(e_1,e_2,e_3)$ such that
\begin{equation}
g=\begin{pmatrix}
1 & \begin{matrix}0 & 0\end{matrix} \\
\begin{matrix}0\\0\end{matrix} & A\in \SL(2,\R)
\end{pmatrix}
\end{equation}
with $A$ that does not have $1$ as eigenvalue. By writing
\begin{equation}
f=\begin{pmatrix}
\alpha & {}^tv \\
w & B\in \GL(2,\R)
\end{pmatrix},
\end{equation}
the equation $[f,g]=e$ gives ${}^t vA - {}^t v = 0$ and $w- Aw=0$. Since $1$ is not an eigenvalue of $A$, we have $v=w=0$. Hence we have $f(e_1)=\alpha e_1$. But $g(c)-c$ has $0$ for first coordinate, hence $f(e_1)-e_1=(\alpha-1)e_1=0$, hence $\alpha=1$. Therefore $f\in \R^2\rtimes \SL(2,\R)$.

\item Now we assume that $g\neq \id$ but is unipotent. There are three cases to consider by completing $e_1$ into a basis $(e_1,e_2,e_3)$:
\begin{align}
g_1 &=\begin{pmatrix}
 1 & 0 & 0 \\
 0 & 1 & 1 \\
 0 & 0 & 1
\end{pmatrix},\\
g_2 &=
\begin{pmatrix}
 1 & 1 & 0 \\
 0 & 1 & 1 \\
 0 & 0 & 1
\end{pmatrix},\\
g_3 &=
\begin{pmatrix}
 1 & 0& 1 \\
 0 & 1 & 0 \\
 0 & 0 & 1
\end{pmatrix}.
\end{align}
We write:
\begin{equation}
f  = \begin{pmatrix}
\alpha & b & w\\
x& \beta & v \\
y & z & \gamma
\end{pmatrix}.
\end{equation}
By a direct computation:
\begin{align}
[f,g_1] = e&\implies f =
\begin{pmatrix}
\frac 1{\beta^2} & 0 & w\\
x& \beta & v \\
0 & 0 & \beta
\end{pmatrix},\\
[f,g_2] = e &\implies f=
\begin{pmatrix}
1 & b & w\\
0& 1 & b \\
0& 0 &1
\end{pmatrix}\in\R^2\rtimes \SL(2,\R),\\
[f,g_3] = e &\implies f=
\begin{pmatrix}
\alpha & b & w\\
0& \frac 1{\alpha^2}& v \\
0 & 0 & \alpha
\end{pmatrix}.
\end{align}

Thus the case $g=g_2$ is clear.

When $g=g_1$ then $f(e_1)-e_1$ has for first coordinate $\frac 1{\beta^2}-1$ but $g(c)-c$ has for first coordinate $0$. Hence $\beta=1$. By changing the basis from $(e_1,e_2,e_3)$ to $(e_2,e_1,e_3)$, which does not depends on $f$, we get $f\in \R^2\rtimes \SL(2,\R)$.

When $g=g_3$, denote $c=(c_1,c_2,c_3)$, then $g(c)-c=c_3e_1$ and $f(e_1)-e_1 = (\alpha-1)e_1$. Hence $c_3=\alpha-1$. Therefore, the holonomy $\Gamma$ preserves the hyperplane $z+1=0$. But by irreducibility of the holonomy by Goldman-Hirsch~\cite{GH}, this cannot happen so $g$ can never be equal to $g_3$.\qedhere
\end{enumerate}
\end{proof}

\begin{proof}[Proof of the theorem~\ref{thm-3fold}]
Since $\R^2\rtimes \SL(2,\R)$ is unipotent co-discompacity one fibered, we apply theorem~\ref{thm-unip-disc1}.
\end{proof}

\printbibliography

\end{document}